\documentclass[a4paper,11pt]{amsart}%
\usepackage{hyperref}
\usepackage{pdfsync}
\usepackage{dsfont}
\usepackage{color}
\usepackage{esint}
\usepackage{amsthm}
\usepackage{enumerate}
\usepackage{amsfonts}
\usepackage{amssymb}%

\usepackage{algorithm} 
\usepackage{algpseudocode} 

\usepackage{mathtools}
\usepackage{braket}
\usepackage{bm}
\usepackage{amsmath}
\usepackage{graphicx}
\usepackage{caption}
\usepackage{fullpage}
\usepackage{xcolor}
\usepackage{ulem}
\usepackage{mathtools}
\numberwithin{equation}{section}

\usepackage{float}
\usepackage{subfig}

\usepackage{amsfonts}
\usepackage{graphicx}%
\setcounter{MaxMatrixCols}{30}
\providecommand{\U}[1]{\protect\rule{.1in}{.1in}}
\newtheorem{theorem}{Theorem}[section]

\newtheorem{corollary}[theorem]{Corollary}

\newtheorem{example}[theorem]{Example}

\newtheorem{lemma}[theorem]{Lemma}

\newtheorem{proposition}[theorem]{Proposition}
\newtheorem{remark}[theorem]{Remark}

\newtheorem{assumption}[theorem]{Assumption}

\definecolor{darkred}{rgb}{0.6,0.1,0.1}
\definecolor{darkgreen}{rgb}{0.1,0.6,0.1}
\definecolor{darkblue}{rgb}{0.1,0.1,0.6}

\DeclareMathOperator{\esssup}{ess\,sup}

\DeclareMathOperator*{\argmax}{arg\,max}

\newcommand{\divergence}{\mathrm{div}}

\newcommand{\F}{\mathcal{F}}

\newcommand{\p}{\varrho}

\newcommand{\R}{\mathbb{R}}

\newcommand{\red}{\nc}

\definecolor{mygreen}{rgb}{0.1,0.75,0.2}
\newcommand{\grn}{\color{green!50!black}}
\newcommand{\nc}{\normalcolor}
\newcommand{\E}{\mathbb{E}}

\newcommand{\sign}{\text{sign}}

\renewcommand{\P}{\mathbb{P}}


\newcommand{\dd}{d}




\title{On the regularized risk of distributionally robust learning over deep neural networks}
\author{Camilo Andr\'es Garc\'ia Trillos}
\address{Department of Mathematics, University College London}
\email{camilo.garcia@ucl.ac.uk}
\author{Nicol\'as Garc\'ia Trillos }
\address{Department of Statistics, University of Wisconsin-Madison}
\email{garciatrillo@wisc.edu}



\begin{document}
\normalem

\begin{abstract}

In this paper we explore the relation between distributionally robust learning and different forms of regularization to enforce robustness of deep neural networks. In particular, starting from a concrete min-max distributionally robust problem, and using tools from optimal transport theory, we derive first order and second order approximations to the distributionally robust problem in terms of appropriate regularized risk minimization problems. In the context of deep ResNet models, we identify the structure of the resulting regularization problems as mean-field optimal control problems where the number and dimension of state variables is within a dimension-free factor of the dimension of the original unrobust problem. Using the Pontryagin maximum principles associated to these problems we motivate a family of scalable algorithms for the training of robust neural networks. Our analysis recovers some results and algorithms known in the literature (in settings explained throughout the paper) and provides many other theoretical and algorithmic insights that to our knowledge are novel. In our analysis we employ tools that we deem useful for a future analysis of more general adversarial learning problems.

\end{abstract}

\maketitle

\section{Introduction}

What is the connection between adversarial learning and regularized risk minimization? This is a question of theoretical and practical relevance that aims at linking two different approaches to enforce robustness in learning models. By an adversarial learning problem here we mean a \textit{distributionally robust optimization} (DRO) problem of the form:
\begin{equation}
  \inf\limits_{\theta \in \Theta } \sup_{\tilde \mu: G(\mu_0,\tilde \mu)\leq \delta } J(\tilde \mu,\theta),
  \label{Robust problem:Intro}
\end{equation}
where $\theta$ denotes the parameters of a statistical learning procedure (for example a neural network, a binary classifier, or the parameters of a linear regression), $\mu_0$ denotes an observed data distribution on $\R^d$, $G$ represents some notion of ``distance" between data distributions, $J(\tilde \mu,\theta)$ is a risk relative to some data distribution $\tilde \mu$ and a loss function $j(x, \theta)$, and finally $\delta$ is a parameter that describes the ``power" of an adversary. On the other hand, by \textit{regularization} we mean an optimization problem of the form
\begin{equation}
 \inf_{\theta \in \Theta} J(\mu_0,\theta) + \lambda R(\theta),
 \label{eqn:regularization}
\end{equation}
where $\lambda>0$ is a positive parameter and $R$ is a regularization functional.

The association between these two types problems has been particularly satisfactory in classical statistical parametric learning settings; see \cite{blanchet2019robust,Blanchet2,OPT-026} and references within. Roughly speaking, if $\theta$ in \eqref{Robust problem:Intro} is a finite dimensional vector representing the parameters of a generalized linear model, \red $J(\mu,\theta) = \int j(x,\theta) d \mu$, where $j$ \nc is a loss function such as square loss or logistic loss, and the function $G$ is an appropriate Wasserstein distance, then one can show that the family of problems \eqref{Robust problem:Intro} coincides with a family of Lasso objectives that includes the popular squared-root Lasso model from \cite{SqRtLasso}.

In more general learning settings, and in particular in the setting explored in this paper, there is no direct equivalence between the adversarial problem \eqref{Robust problem:Intro} and regularization. There are however other settings where one can \textit{exploit} the structure of the given learning model to derive \textit{specific} insights into the type of regularization associated to \eqref{Robust problem:Intro}. For example, in \cite{MurrayNGT}, in the context of non-parametric binary classification, a connection between adversarial learning and regularization is explored by establishing geometric evolution equations that must be satisfied by the ensemble of solutions to the family of adversarial problems \eqref{Robust problem:Intro} indexed by $\delta$. In general, an illuminating strategy that can be followed in a variety of settings in order to gain insights into the regularization counterpart of \eqref{Robust problem:Intro} is to analyze the max part of the problem for small $\delta$ and identify its leading order terms to use them as regularization terms. This is a strategy that has been followed in many works that study the robust training of neural networks, e.g. \cite{ObermanFinlay,GradientRegularization,CurvatureRegularization,SlavinRoss,StructuredGradReg,pmlr-v139-yeats21a}; see more discussion in subsequent sections. However, even with an approximation in hand, the specific structure of the resulting regularization problems will depend on the specific learning models under consideration.


Our goal in this paper is to provide a concrete mathematical connection between a family of distributionally robust learning problems and regularization problems in the context of deep neural network models, and specifically ResNet models. Our analysis provides new theoretical insights on new and existing methods for robust training of neural networks, suggests new algorithms, and revisits older algorithms that can be recast as specific instances of a general unifying family. Our work also suggests new forms of regularization for optimal control problems that are meaningful beyond the applications to machine learning. 
The main motivation for working with ResNet models is that there is a clear interpretation of truly deep ResNet models (formally, the number of layers is infinity): in the large number of layers limit, the training of a ResNet may be interpreted as a continuous time optimal control problem. This ODE perspective for understanding and training neural networks has received increased attention in the past few years  -- see \cite{NeuralODESs,Ruthoto,DeepResLimits}. The specific structure of this setting will then allow us to recognize the resulting regularization problems as mean-field control problems, and thus will motivate us to derive their corresponding Pontryagin maximum principles. In turn, these maximum principles can be used to motivate a large class of algorithms for the training of robust networks which includes the double backpropagation algorithm from \cite{DoubleBackProp}. The use of Pontryagin maximum principle based training algorithms has been advocated for in works like \cite{EMaximPrinciple} given their generality, versatility, and theoretical properties.

In the next two sections we introduce the specific set-up that we will work with throughout the paper. Our main theoretical results are presented in section \ref{sec:Regularization} and our algorithms in section \ref{sec:algorithm}.




\nc

\medskip

\nc

\subsection{Network models}
\label{sec:NetworkModels}

As discussed in the introduction, we will focus our discussion on deep ResNet neural networks and use the differential equation in $\R^d$
\begin{equation}
   \begin{cases} dX_t=  f(X_t,\theta_t)dt, \quad t \in (0,T),\\ X_0=x \end{cases}  
   \label{eqn:ODE}
\end{equation}
 to model the transformations that an input data point $x \in \R^d$ undergoes along a deep neural network; notice that, with the previous interpretation, $X_T$ is the output of the network when the input is $x$. The function $\theta: [0,T] \rightarrow \Theta_0$ represents the parameters of the network and $\Theta$ is a family of $\theta$s.  The ``time" variable $t$ can be interpreted as index for the layers of the model and the time horizon $T$ as the depth of the network. $\Theta_0$ represents the possible values of parameters at a given layer. We remark once again that a ResNet model found in practice can be seen as a time discretization of \eqref{eqn:ODE}; see \cite{e_meanfield_2018,DeepResLimits}.

\begin{example}
\label{ex:1}
The previous general setting can be used for regression with the following interpretation. We write $x=(v,y) \in \R^{d-1} \times \R$ and interpret $v$ as feature vector and $y$ as label or output. The function $f(\xi, \vartheta )$ can be taken to be
\[f(\xi, \vartheta )=\left( \begin{matrix}\sigma( \vartheta \cdot \xi_{1:d-1}) \\ 0 \end{matrix} \right),  \]
interpreting $\vartheta$ as a $(d-1)\times (d-1)$ matrix, and $\sigma$ as a non-linear function (e.g. a sigmoid or ReLu) that acts coordinate-wisely on $d-1$-dimensional vectors. Notice that if we write $X_t=(V_t,Y_t)^\top$, then $Y_t=y$ for every $t$ and in particular $X_T=(V_T,y)^\top$. 
\end{example}
\nc

We introduce two functions $\ell: \R^d \times \Theta_0 \rightarrow \R$ and $\Phi: \R^d \times \Theta_0$ which from the control theory perspective can be interpreted as terminal and running costs respectively. For $x \in \R^d$ and $\theta \in \Theta$ we define:
\[ j(x,\theta ):= \ell ( X_{x,T}, \theta_T) + \int_0^T \Phi(X_{x,t},\theta_t) dt.  \]
In the above, we have used the notation $X_{x,t}$ to represent the solution to \eqref{eqn:ODE} with the extra subscript highlighting the initial condition $x$. The value of $j(x,\theta)$ can be interpreted as the loss (including the extra penalization) that the network with parameters $\theta$ incurs into when $x$ is the network's input.

\begin{example}
\label{ex:2}
[Continuation of Example \ref{ex:1}]
In the setting considered in Example \ref{ex:1} we can take $\Phi$ to be either identically equal to zero or be a function penalizing the size of the parameters only. As for the terminal cost, we can take
\[\ell(\xi,\vartheta)=  (\vartheta \cdot \xi_{1:d-1}- \xi_d )^2, \]
this time interpreting $\vartheta$ as a $d-1$-dimensional vector, i.e. $\Theta_0$ is a subset of $\R^{d-1}$. With this choice we obtain $\ell(X_{x,T},\theta_T) =( \theta_T \cdot V_{x,T} -y  )^2 $, i.e. squared loss. 
\end{example}
\nc

For a given probability distribution $\mu_0$ and for a control $\theta \in \Theta$ we define the risk
\[ J(\mu_0,\theta):= \E_{x \sim \mu_0} \left[ j(x, \theta) \right],  \]
and consider a so-called mean-field control problem (see \cite{e_meanfield_2018}):
\begin{equation}
\inf\limits_{\theta \in \Theta} J(\mu_0,\theta).
\label{def:control problem}
\end{equation}
For us, $\mu_0$ represents the training data distribution which at this stage can be simply assumed to be an empirical measure; problem \eqref{def:control problem} is then a risk minimization problem relative to the training distribution $\mu_0$. We have made the dependence of problem \eqref{def:control problem} on the training data set $\mu_0$ explicit as our goal is precisely to study its sensitivity to changes in the training input as will become more apparent in the next section when we introduce our adversarial learning problem precisely. We remark that in order to rigorously connect the optimization problem characterizing the training of a ResNet model with finitely many layers with the idealized continuous time control problem  model considered here one needs to use variational techniques as discussed in \cite{DeepResLimits}. 

\subsubsection{Further notation} 

\begin{itemize}
\item $x, \tilde x$ represent vectors in $\R^d$ and will be used to denote inputs of the neural network.
\item We use $D$ to denote a matrix of derivatives of a vector valued function whereas we use $\nabla$ to denote the gradient of a scalar valued function. $D^2$ is reserved to indicate matrices/tensors of second order derivatives of scalar/vectorial functions. By convention, we identify the derivative of $A:\R^d\rightarrow \R^{c_1 \times \cdot \times c_k}; x\mapsto A(x)$  with  a tensor of size $(c_1,\ldots,c_k,d)$.
\item $\xi$ represents a vector in $\R^d$ and $\vartheta$ an element in $\Theta_0$. They will be used as dummy variables for the functions $\ell$, $\Phi$, and $f$. In particular, $\nabla_\xi \Phi$ and $\nabla _\vartheta \Phi $ represent the vector of derivatives of $\Phi$ with respect to $\xi$ and $\vartheta$ respectively.

\red 
\item The matrix $D_\xi f$ has coordinates $ [D_\xi f]_{ij} = \frac{\partial }{\partial \xi_j} f_i$, where $(f_1, \dots, f_d)$ are the coordinate functions of $f$. We write the tensor $D^2_\xi f$ in coordinates as
\[ [D^2_{\xi} f]_{ijk} = \frac{\partial^2 f_i}{\partial \xi_j \partial \xi_k}.   \]
Notice that $[D^2_{\xi} f]_{ijk} = [D^2_{\xi} f]_{ikj}$. The tensor $(D^2_{\xi} f)^\top$ is defined as $[(D^2_{\xi} f)^\top]_{ijk} := [(D^2_{\xi} f)]_{kij}$. Finally, the tensor $D^3_\xi f$ is defined in coordinates as
\[ [D^3_{\xi} f]_{ijkl} = \frac{\partial^3 f_i}{\partial \xi_j \partial \xi_k \partial \xi_l},  \]
and we define $[(D^3_{\xi} f)^\top]_{ijkl} := [(D^3_{\xi} f)]_{lijk}$.

\nc


\item $\theta$ represents the weights of the ``ideal" (continuous time) ResNet neural network. 


 \item $\tilde \mu$ will generically represent a probability measure over the variable $x$ and will typically be interpreted as a perturbation of the data distribution $\mu_0$.
\end{itemize}

\subsection{Adversarial learning}

In this paper we restrict our attention to the family of distributionally robust adversarial problems:
\begin{equation}
  \inf\limits_{\theta \in \Theta } \sup_{\tilde \mu: W_p(\mu_0,\tilde \mu)\leq \delta } J(\tilde \mu ,\theta),
  \label{Robust problem}
\end{equation}
where $W_p$ stands for the $p$-Wasserstein distance:
\[W_p(\mu,\tilde \mu):= \inf_{\pi \in \Gamma(\mu,\tilde \mu)} \left\{ \int_{\R^{d}\times \R^{d}} c_p(x,\tilde x) \dd \pi(x,\tilde x)\right\}^{1/p}\]
defined for two probability measures $\mu, \tilde \mu$ over $\R^d$ (or over a compact subset of $\R^d$). 
The function $c_p: \R^d \times \R^d \rightarrow \R$ is the $p$-cost:
 \[ c_p(x,\tilde x):= \lVert x-\tilde x \rVert^p,\]
 where $\lVert \cdot\rVert$ denotes an arbitrary norm in $\R^d$. The family of problems \eqref{Robust problem} is indexed by $p\in[0,\infty]$. We interpret the case $p=\infty$ as
\[ W_\infty(\mu, \tilde \mu )=  \min_{ \pi \in \Gamma(\mu, \tilde \mu)} \esssup_\pi \{ \lVert x-\tilde x \rVert  \: : \: (x, \tilde x) \in \R^d \times \R^d \}, \]
while the case $p=0$ as the total variation distance between measures. In the remainder we will restrict our attention to the case $p \in [2,\infty]$.

\begin{example}
\label{ex:3}
[Continuation of Example \ref{ex:2}]
While problem \ref{Robust problem} with the cost $c_p$ introduced earlier is meaningful in a regression setting, there are other adversarial settings of interest where for example adversaries are only allowed to perturb feature vectors and not labels. In that case, the cost $c_r$ introduced earlier can be replaced with the closely related cost:
\[ \hat{c}_p( (v,y), (\tilde v , \tilde y) ) = \begin{cases} \lVert v-\tilde v\rVert^p, & \text{if } y=\tilde y, \\ \infty, & \text{ else }.  \end{cases}   \]
The analysis that we present in the remainder of the paper adjusts easily to this cost function. We omit the details.
\end{example}
\nc

\begin{remark}
  We notice that since the Wasserstein distances satisfy the relation $W_p \leq W_{p'}$ when $p \leq p'$, it is straightforward to see that the adversary in problem \eqref{Robust problem} is stronger than the analogous adversary when choosing $p'$. In particular, of all the adversaries indexed by $p$, the weakest one is the one corresponding to $p=\infty$.
\end{remark}

\red 
\begin{remark}
\label{rem:CouplingsRem}
Problem \eqref{Robust problem} can be equivalently reformulated as
\begin{equation}
  \inf\limits_{\theta \in \Theta } \sup_{\pi \in \mathcal{F}_{\mu_0, \delta}} J(\pi ,\theta),
  \label{Robust problem Couplings}
\end{equation}
where $\mathcal{F}_{\mu_0, \delta}$ is the set of probability measures on $\R^d\times \R^d $ satisfying $\int_{\R^d \times \R^d} c_p(x, \tilde x) d \pi(x,\tilde x) \leq \delta^p$ and $P_{1\sharp} \pi = \mu_0$ (where $P_{1 \sharp} \pi$ denotes the marginal of $\pi$ on the first coordinate); we abuse notation slightly and use $J(\pi, \theta)$ to denote $\int_{\R^d \times \R^d} j(\tilde x , \theta)d\pi(x, \tilde x)$. In other words, by replacing the variable $\tilde \mu$ with the variable $\pi$, the non-linear constraint in $\tilde \mu$ gets replaced by linear constraints in $\pi$.
\end{remark}

\begin{remark}
 In an alternative formulation of \eqref{Robust problem}, one can replace the constraint on $\tilde \mu$ with an explicit penalization:
 \begin{equation}
  \inf\limits_{\theta \in \Theta } \sup_{\tilde \mu}\left\{ J(\tilde \mu ,\theta) - \frac{1}{\lambda} W_p^p(\mu_0,\tilde \mu) \right\},
  \label{eqn: Roustness Explicit}
\end{equation}
for some $\lambda>0$. Just as for the constrained problem, \eqref{eqn: Roustness Explicit} admits an equivalent reformulation in terms of couplings: 
 \begin{equation*}
  \inf\limits_{\theta \in \Theta } \sup_{P_{1\sharp} \pi = \mu_0}\left\{ J(\pi ,\theta) -\frac{1}{\lambda} \int_{\R^d \times \R^d} c_p(x, \tilde x) d \pi(x, \tilde x)  \right\}.
\end{equation*}
Since $J$ is linear in $\pi$, under additional convexity assumptions on the set $\Theta$ and on the function $j(x, \cdot)$, up to technical details the above problem is equivalent to \eqref{Robust problem Couplings} in the sense that for a given $\delta>0$ there is a $\lambda>0$ such that a solution to one problem is also a solution for the other. Indeed, under these additional convexity assumptions, the $\inf$ and $\sup$ can get swapped in \eqref{Robust problem Couplings}, which in turn would allow us to replace the explicit constraint on $\pi$ with an explicit penalization for a fixed constant $\lambda>0$.

Without the additional convexity assumptions mentioned above the two problems may very well be different. It is however important to notice that if we attempted to find an approximation to problem \eqref{eqn: Roustness Explicit} for small $\lambda$ following some of the steps in section \ref{sec:TaylorGeometry} we would obtain a problem that is very similar to \eqref{eqn:Regularization}, which recall is an approximation to the adversarial problem with explicit data perturbation constraints.
\end{remark}

\nc

\subsection{Regularized risk minimization and associated control problems}
\label{sec:Regularization}

In order to make our results mathematically precise, we impose some extra conditions on all the terms that determine the min-max problem \eqref{Robust problem} in our setting.

\begin{assumption}\ 
  \begin{enumerate}[i.]
      \item $f$ is bounded; $f, \Phi$ are continuous in $\theta$; and $f, \Phi, \ell$ are continuously differentiable with respect to $x$. The first derivatives of $f, \Phi, \ell$ are Lipschitz continuous in space uniformly in $\theta$.     
      Moreover, $\ell$ is continuously differentiable with respect to $\theta$.
      \item The distribution $\mu_0$ has bounded support in $\R^{d}$.
  \end{enumerate}
  \label{A1}
  \end{assumption}

\begin{assumption}\ 
  \begin{enumerate}[i.]
      \item Assumption \ref{A1} holds; and
      \item $f, \Phi, \ell$ are twice continuously differentiable with respect to $x$. 
  \end{enumerate}
  \label{A2}
  \end{assumption}


\nc 
Our first theoretical result is the following.

\begin{theorem}(Regularization of distributionally robust adversarial learning: first order case)
\label{thm:Regularization}
Let $p \in [2,\infty]$. Under Assumptions \ref{A2} the objective function
\begin{equation}
\sup_{\tilde \mu: W_p(\mu_0,\tilde \mu)\leq \delta } J(\tilde \mu, \theta) 
\label{eqn:RobustObjective}
\end{equation}
is equal to
 \[ J(\mu_0, \theta) +  \delta \cdot \left(\E_{x \sim \mu_0} \left[  \lVert\nabla_x j(x,\theta)\rVert^q_* \right]\right)^{1/q} + O(\delta^2), \]
where the $O(\delta^2)$ term is uniform over all $\theta \in \Theta$, $q$ is $p$'s conjugate, i.e. $\frac{1}{p}+\frac{1}{q}=1$, and $\lVert\cdot \rVert_*$ is the dual norm of $\lVert\cdot \rVert$. In particular,
\[ V_\delta^* - U_\delta^* = O(\delta^2), \]
where $V_\delta^*$ is equal to the infimum of \eqref{eqn:RobustObjective} over all $\theta \in \Theta$ and $U_\delta^*$ is equal to
\begin{equation}
\label{eqn:Regularization}
\inf_{\theta \in \Theta} \left\{ J(\mu_0, \theta) + \delta \cdot  \left(\E_{x \sim \mu_0} \left[  \lVert\nabla_x j(x,\theta)\rVert^q_* \right]\right)^{1/q} \right\}.
 \end{equation}
\end{theorem}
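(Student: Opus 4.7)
The plan is to bound the sup over $\tilde\mu$ from above by the desired regularized functional plus $O(\delta^2)$, and then to show this bound is essentially sharp by exhibiting a near-optimal perturbation. Since the statement is pointwise in $\theta$ with a uniform remainder, the final inequality $V_\delta^* - U_\delta^* = O(\delta^2)$ then follows by taking $\inf_\theta$ on both sides.

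For the upper bound, I would parametrize any admissible $\tilde\mu$ via a coupling $\pi \in \Gamma(\mu_0,\tilde\mu)$ with $\int \lVert x - \tilde x\rVert^p \dd\pi \le \delta^p$. Taylor expanding $j(\cdot,\theta)$ to second order gives
\[
J(\tilde\mu,\theta) - J(\mu_0,\theta) = \int \nabla_x j(x,\theta) \cdot (\tilde x - x) \dd \pi(x,\tilde x) + R(\pi,\theta),
\]
with $|R(\pi,\theta)| \le \tfrac{1}{2} M \int \lVert \tilde x - x\rVert^2 \dd\pi$, where $M := \sup_{x,\theta} \lVert D^2_x j(x,\theta)\rVert$ is finite thanks to Assumption \ref{A2}: the bounds on $f,Df,D^2 f$ and on $\ell,\Phi$ and their first two derivatives propagate, through Gronwall-type estimates for the variational equations of \eqref{eqn:ODE}, to a uniform-in-$\theta$ bound on $D^2_x j$. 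Using $W_2 \le W_p$ for $p \ge 2$, the remainder is $O(\delta^2)$ uniformly in $\theta$ and in $\pi$. The main term is then bounded by the generalised Hölder inequality with exponents $q$ and $p$:
\[
\int \nabla_x j(x,\theta) \cdot (\tilde x - x) \dd\pi \le \left(\int \lVert\nabla_x j(x,\theta)\rVert_*^q \dd\mu_0\right)^{1/q} \left( \int \lVert \tilde x - x\rVert^p \dd\pi\right)^{1/p}.
\]
Infimizing over $\pi$ on the right side of this inequality gives $W_p(\mu_0,\tilde\mu) \le \delta$, so the supremum over $\tilde\mu$ is at most $J(\mu_0,\theta) + \delta\,\E_{\mu_0}[\lVert\nabla_x j\rVert_*^q]^{1/q}$ plus the $O(\delta^2)$ remainder. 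The cases $p=\infty$ (with $q=1$) and general $p \in [2,\infty)$ are handled in the same way, reading the Hölder inequality as an $L^\infty$--$L^1$ duality when $p=\infty$.

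For the matching lower bound I would construct an explicit admissible $\tilde\mu$. By a measurable selection argument (standard since $\R^d$ is separable), choose a Borel map $x \mapsto n(x)$ with $\lVert n(x)\rVert = 1$ and $n(x)\cdot \nabla_x j(x,\theta) = \lVert\nabla_x j(x,\theta)\rVert_*$. Define, for $p \in [2,\infty)$, the transport map
\[
T(x) := x + \delta \cdot \frac{\lVert \nabla_x j(x,\theta)\rVert_*^{q/p}}{\lVert\nabla_x j(\cdot,\theta)\rVert_{L^q(\mu_0)}^{q/p}}\, n(x),
\]
and $\tilde\mu := T_\# \mu_0$ (with the convention $T=\mathrm{Id}$ when the denominator vanishes, and for $p=\infty$ take $T(x) := x + \delta\, n(x)$). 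A direct computation using $1/p + 1/q = 1$ gives $\int \lVert T(x) - x\rVert^p \dd\mu_0 = \delta^p$, so $W_p(\mu_0,\tilde\mu) \le \delta$, and
\[
\int \nabla_x j(x,\theta) \cdot (T(x) - x) \dd\mu_0(x) = \delta \cdot \lVert\nabla_x j(\cdot,\theta)\rVert_{L^q(\mu_0)},
\]
using the identity $q - q/p = 1$. Inserting this into the Taylor expansion with the same uniform second-order remainder gives a matching lower bound on the supremum.

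Combining the two bounds yields the pointwise identity with a $\theta$-uniform $O(\delta^2)$ error, whence $V_\delta^* - U_\delta^* = O(\delta^2)$ follows by taking $\inf_\theta$ on both sides. The main technical obstacle is the uniform-in-$\theta$ bound on $D^2_x j$ needed to make the Taylor remainder $O(\delta^2)$ uniformly: this requires propagating the $D^2_x$ bounds on $\ell,\Phi,f$ through the ODE \eqref{eqn:ODE} and its first and second variational equations, which is where the full strength of Assumption \ref{A2} (beyond Assumption \ref{A1}) is actually used. A secondary, more pedestrian issue is the Borel-measurable selection of the dual unit vector $n(x)$, which is routine but worth mentioning since the norm $\lVert\cdot\rVert$ is arbitrary and need not be strictly convex.
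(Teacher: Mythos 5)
Your proposal is correct and follows essentially the same route as the paper's proof: parametrize the admissible $\tilde\mu$ by couplings with first marginal $\mu_0$, Taylor-expand $j(\cdot,\theta)$ to second order, control the remainder uniformly via Jensen's inequality using $p\geq 2$, and identify the leading term through the generalized H\"older inequality. The only difference is one of explicitness: where the paper merely asserts that equality in H\"older is attained by a proper choice of $\tilde X - X$ with $\E[\lVert\tilde X - X\rVert^p]^{1/p}=\delta$, you write down the attaining transport map and verify the exponent bookkeeping, which is a welcome addition rather than a deviation.
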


\red 

\begin{remark}
  Theorem \ref{thm:Regularization} implies that under the given assumptions, for any fixed control in the admissible domain, the value function of the robust problem and the value function of the regularized problem are close with an error of order $\delta^2$. Hence, a minimizer for any of the two problems is a $\delta^2$-minimizer for the other.
\end{remark}
\nc

\begin{remark}
\label{rem:p=inftyFirst}
When $p=\infty$, problem \eqref{eqn:Regularization} can be written more succinctly as
\begin{equation}
\min_{\theta \in \Theta } \left\{ J(\mu_0, \theta) + \delta \cdot \E_{x \sim \mu_0} \left[ \lVert \nabla_x j(x,\theta)\rVert_*  \right]\right\}.
\label{Robust control infty}
\end{equation}  
Notice that the objective is linear in $\mu_0$. 
This property is useful in connection to the use of stochastic gradient methods for training; see the discussion in section \ref{sec:algorithm}. Problem \eqref{Robust control infty} and other closely related problems that are linear in $\mu_0$ and that penalize the gradient of the loss function have been considered in several works in the literature including \cite{DoubleBackProp,ObermanFinlay,GradientRegularization,SlavinRoss,StructuredGradReg,pmlr-v139-yeats21a}.
\end{remark}

It is important to highlight that Theorem \ref{thm:Regularization} does not depend on the specific structure of neural network models and indeed continues to be true in other learning settings as long as the following assumption (implied by Assumption \ref{A2} in our neural network setting) holds:

\begin{assumption}
\label{assump:Smoothness}  
For every $\theta \in \Theta$ the function $j(\cdot,\theta)$ is twice differentiable with uniformly bounded second derivatives:
\[ \lVert  D^2_x j(\cdot ,\theta) \rVert_{L^\infty(\R^d)} \leq C, \]
where $C>0$ is independent of $\theta \in \Theta$.
\end{assumption}

The interpretation of Theorem \ref{thm:Regularization} is straightforward: the leading order regularization effect of the adversarial problem \eqref{Robust problem} with $p$-cost is given by \eqref{eqn:Regularization}. Put in another way, \eqref{Robust problem} for small $\delta$ is an approximation to problem \eqref{eqn:Regularization}. Moreover, a minimizer of \eqref{eqn:Regularization} is guaranteed to have value of \eqref{eqn:RobustObjective} within $O(\delta^2)$ of the minimum. Notice that Problem \eqref{eqn:Regularization} is a form of regularized risk minimization of the form \eqref{eqn:regularization} with regularization term given by
\[ \left( \E_{x \sim \mu_0} \left[  \lVert\nabla_x j(x,\theta)\rVert_*^q \right]\right)^{1/q}, \]
only that in this case the regularization does depend on the data distribution $\mu_0$. Intuitively, this new term forces the parameters of the model to be chosen so as to make the loss $j( x,\theta)$ insensitive to perturbations of the data (i.e. the gradient in $x$ of the loss function should be small in the support of $\mu_0$). 

\medskip
In our setting of interest, the regularized problem \eqref{eqn:Regularization} possesses an interesting structure that will be convenient to elaborate on as it provides the basis for the algorithms for training robust neural networks discussed in section \ref{sec:algorithm}. 
First, let us consider the control problem:
\begin{equation}
\begin{aligned}
\inf_{\theta \in \Theta } \quad & \E_{x \sim \mu_0}\left[ j(x,\theta) \right] \\
\textrm{s.t.} \quad & \begin{cases}dX_{x,t} =f(X_{x,t},\theta_t) dt , \quad t \in (0,T), \\ X_{x,0}=x,    \end{cases}
\end{aligned}
\label{eqn:ControlBasic}
\end{equation}
which is nothing but the (unrobust) training problem for the neural network; we have made the constraints in the above problem explicit to facilitate the comparison with the problems introduced below. For each $\theta \in \Theta$ and $x \in \R^d$ there is a corresponding dual variable $P :[0,T] \rightarrow \R^d$ associated to the ODE \eqref{eqn:ODE} which can be written as: 
\begin{equation}
\begin{cases}dP_{x,t} = \red- \nc\nabla_\xi H_0(X_{x,t},\theta_t,P_{x,t}) dt , \quad t \in (0,T), \\ P_{x,T}=- \nabla_\xi \ell(X_{x,T},\theta_T), 
\end{cases}
\end{equation}
where $H_0$ is the Hamiltonian: 
\[ H_0(\xi,\vartheta, \varrho):= \varrho \cdot f(\xi,\vartheta)  -  \Phi(\xi,\vartheta), \quad  \xi \in \R^d, \quad \vartheta \in \Theta_0, \varrho \in \R^d.\]
$P_{x,0}$ is known to be equal to the \red negative \nc gradient (in $x$) of the function $j(\cdot,\theta) $ when holding $\theta$ fixed (see the beginning of section \ref{Sec: Locally robust problem} for a proof of this fact), that is, $P_{x,0}$ captures the sensitivity of $j(x,\theta)$ to perturbations in the input. This insight has been used in \cite{YOPO} to propose algorithms for the training of robust neural networks.

With this new interpretation, Problem \eqref{eqn:Regularization} can be rewritten as a control problem:
\begin{equation}
\begin{aligned}
\inf_{\theta \in \Theta } \quad & \left\{ \E_{x \sim \mu_0}\left[ j( x,\theta)  \right]+ \delta \cdot\left(  \E_{x \sim \mu_0}  \left[  \lVert P_{x,0} \rVert_*^q \right]\right)^{1/q} \right\} \\
\textrm{s.t.} \quad & \begin{cases}dX_{x,t} =f(X_{x,t},\theta_t) dt , \quad t \in (0,T) \\ X_{x,0}=x\\
dP_{x,t} = \red- \nc \nabla_\xi H_0(X_{x,t}, \theta_t, P_{x,t}) dt , \quad t \in (0,T) \\ P_{x,T}=- \nabla_\xi \ell(X_{x,T},\theta_T) .
\end{cases}
\end{aligned}
\label{eqn:ControlExpanded}
\end{equation}

Our second main result identifies the first order optimality condition for this new control problem.

\begin{theorem}
  Let $p \in [2,\infty]$. Suppose that Assumption \ref{A2} holds and that $\lVert \cdot \rVert$ is the Euclidean norm. Suppose that $(\theta^*, X^*, P^*)$ is a minimizer for problem \eqref{eqn:ControlExpanded}. Then, there exist absolutely continuous processes $\alpha^*,\beta^*$ such that for $\mu_0$-a.e. $x$ we have:
  \begin{align}
    \alpha_{x,t}^* & =  -\nabla_\xi \ell(X_{x,T}^*,\theta_T^*) + D^2_\xi \ell(X_{x,T}^*,\theta_T^*)^\top \beta^*_{x,T} \label{Def alpha}\\
    &\qquad  + \int_t^T \{ D_\xi f(X_{x,s}^*,\theta_s^*)^\top \alpha^*_{x,s} - \nabla_\xi \Phi(X_{x,s}^*,\theta_s)   \} \dd s \nonumber\\
    &\qquad  + \int_t^T \{ D^2_\xi \Phi(X_{x,s}^*,\theta_s^*)  - D^2_\xi f(X_{x,s}^*,\theta_s^*)^\top P_{x,s}^*   \}^\top  \beta_{x,s}^* \dd s \nonumber\\
    \beta_{x,t}^* & =   \delta  \nc \left(  \E_{x_0 \sim \mu_0} \left[ \lVert P^*_{x,0}\rVert^q  \right] \right)^{- \frac 1 p }  \lVert P_{x,0}^*\rVert ^{q-2} P_{x,0}^*  + \int_0^t D_\xi f(X_{x,s}^*,\theta_s^*) \beta_{x,s}^* \dd s, \label{Def beta}
  \end{align}
  and also
  \begin{equation}
    \theta_t^* \in  \arg\max_{\vartheta \in \Theta_0}  \left\{  \E_{x \sim \mu_0} \left[ H(X_{x,t}^*,P^*_{x,t},\alpha_{x,t}^*, \beta_{x,t}^*,\vartheta)\right]    \right\}, 
    \label{Modified maximum principle}
  \end{equation} 
  where 
  \begin{equation}
    H(\xi,\p,\alpha,\beta,\vartheta) := \red \alpha \cdot f(\xi,\vartheta) -\Phi(\xi,\vartheta) -   \beta \cdot ( D_\xi f(\xi,\vartheta)^ \top \p   -\nabla_\xi \Phi(\xi,\vartheta)  ) \nc
    \label{New Hamiltonian}
  \end{equation} 
  is the Hamiltonian of problem \eqref{eqn:ControlExpanded}.
  \label{Thm: Pontryagin}
  \end{theorem}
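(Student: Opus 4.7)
My plan is to derive the stated optimality conditions via a direct variational/adjoint-process argument adapted to the fact that the constraints in \eqref{eqn:ControlExpanded} consist of a forward ODE for the state $X$ and a terminal-condition ODE for the costate $P$, while the cost depends on both $X_T$ (through $\ell$) and $P_0$ (through the new regularizer $\delta (\E\lVert P_0\rVert^q)^{1/q}$). Because of this doubled state, two adjoint processes $\alpha^*$ (attached to $X^*$) and $\beta^*$ (attached to $P^*$) have to arise, and their defining equations are dictated by the requirement that, after integration by parts, the first-order variation of the total cost involves only the control perturbation.

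First I would fix an optimizer $(\theta^*, X^*, P^*)$ and, using Assumption \ref{A2}, consider a smooth perturbation $\theta^\e_t = \theta^*_t + \e \eta_t$. Differentiating the state equations at $\e=0$ produces linear ODEs for the sensitivity processes $Y_t := \partial_\e X^\e_{x,t}|_{\e=0}$ (forward, with $Y_0=0$ and source $D_\vartheta f\cdot \eta$) and $Z_t := \partial_\e P^\e_{x,t}|_{\e=0}$ (with terminal condition determined by $Y_T$ and $\eta_T$ via $D^2_\xi \ell$ and $D_{\xi,\vartheta}\ell$, and coefficients involving $D^2_\xi f^\top P^*$ and $D^2_\xi \Phi$---precisely why Assumption \ref{A2} is needed). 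Next I would differentiate the cost. The part $J_1 := \E[j(x,\theta)]$ contributes the usual Pontryagin pairings $\E[\nabla_\xi \ell \cdot Y_T + \nabla_\vartheta \ell \cdot \eta_T] + \E\int_0^T [\nabla_\xi \Phi \cdot Y + \nabla_\vartheta \Phi \cdot \eta]\,dt$. The regularizer $J_2 := \delta (\E\lVert P_0\rVert^q)^{1/q}$, by the chain rule, contributes exactly $\E[\beta_0 \cdot Z_0]$ where
\[ \beta_0 := \delta \bigl(\E\lVert P^*_{x,0}\rVert^q\bigr)^{-1/p} \lVert P^*_{x,0}\rVert^{q-2} P^*_{x,0}; \]
this is where the Euclidean assumption enters, allowing $\lVert \cdot \rVert^q$ to be differentiated in closed form, and the scalar prefactor $(\E\lVert P^*_0\rVert^q)^{-1/p}$ encodes the mean-field coupling across $x$.

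The adjoints are then introduced in order to convert the $Y$- and $Z$-pairings above into $\eta$-pairings. I would define $\beta^*$ on $[0,T]$ as the unique absolutely continuous process with $\beta^*_{x,0} = \beta_0$ and a linear forward ODE chosen so that in the identity $\E[\beta^*_T \cdot Z_T - \beta_0 \cdot Z_0] = \E\int_0^T (\dot\beta^* \cdot Z + \beta^* \cdot \dot Z)\,dt$ the $Z$-proportional pieces coming from $\dot Z$ are eliminated; working this out yields $\dot\beta^* = D_\xi f(X^*,\theta^*)^\top \beta^*$, which is the differential form of \eqref{Def beta}. Analogously, I would define $\alpha^*$ with terminal condition $\alpha^*_T := -\nabla_\xi \ell + D^2_\xi \ell^\top \beta^*_T$---so that the boundary $\alpha^*_T \cdot Y_T$ absorbs both the $\nabla_\xi \ell \cdot Y_T$ piece from the variation of $J_1$ and the $-D^2_\xi \ell \cdot Y_T$ leftover from $\beta^*_T \cdot Z_T$---and with a backward linear ODE arranged so that the $Y$-terms from $\dot Y$, the $Y$-source in $\beta^* \cdot \dot Z$ (which brings in $D^2_\xi f^\top P^*$ and $D^2_\xi \Phi$), and the running $\nabla_\xi \Phi \cdot Y$ all cancel; this is precisely the differential form of \eqref{Def alpha}.

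After these cancellations, the first-order variation of the cost reduces to a single pairing $\E\int_0^T \Lambda_t \cdot \eta_t\,dt + \E[\Lambda^\partial_T \cdot \eta_T]$ in which $\Lambda_t$ is built from the four ingredients $\nabla_\vartheta \Phi$, $-D_\vartheta f^\top \alpha^*$, $(D_{\xi,\vartheta} f^\top P^*)^\top \beta^*$, and $-(D_{\xi,\vartheta}\Phi)^\top \beta^*$, which together match the $\vartheta$-gradient of the Hamiltonian $H$ in \eqref{New Hamiltonian}. Passing from first-order stationarity along smooth $\eta$ to the pointwise maximum principle \eqref{Modified maximum principle}---required because $\Theta_0$ need not be convex---is then done by a standard needle-variation argument. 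The main obstacle I anticipate is bookkeeping: one must be scrupulous with the conventions for the 3-tensor $D^2_\xi f$ contracted against both $P^*$ and $\beta^*$, with the direction of evolution of the backward state $P^*$ and the forward adjoint $\beta^*$, and with the fact that the per-$x$ adjoint system is coupled across $x$ only through the common scalar $(\E\lVert P^*_0\rVert^q)^{-1/p}$, which must be carried as a fixed constant throughout the per-$x$ derivation and only reinterpreted after averaging.
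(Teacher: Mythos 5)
Your identification of the adjoint structure is the same as the paper's: two costates $\alpha^*$ (for $X^*$) and $\beta^*$ (for $P^*$), with $\beta^*$ running forward from the initial condition produced by differentiating $\delta(\E\lVert P_0\rVert^q)^{1/q}$ and $\alpha^*$ running backward with terminal condition $-\nabla_\xi\ell + D^2_\xi\ell^\top\beta^*_T$, both determined by demanding that integration by parts eliminate the state sensitivities. That is exactly the mechanism in the paper's proof (which additionally simplifies by absorbing $\Phi$ into an augmented state so that it can take $\Phi\equiv 0$). However, your variational scheme has a genuine problem: you propose to extract the adjoint equations from a smooth perturbation $\theta^\e_t=\theta^*_t+\e\eta_t$, which presupposes both that $\Theta_0$ has linear structure and that $f$ is differentiable in $\vartheta$. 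Neither is available --- Assumption \ref{A1} only asserts that $f$ and $\Phi$ are \emph{continuous} in $\theta$ --- and this is precisely why the conclusion \eqref{Modified maximum principle} is an $\arg\max$ condition rather than a stationarity condition. The paper therefore uses needle variations from the outset ($\theta^{\e,\tau}=\eta$ on $[\tau-\e,\tau]$, $\theta^*$ elsewhere); in your plan the needle argument cannot be deferred to a ``standard'' final step, because it is the only admissible variation and the entire derivation must be carried out with it.

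The second, more substantive omission is what the needle variation does to the backward variable. The sensitivity $v_{x,t}=\lim_{\e\downarrow 0}\e^{-1}(P^{\e,\tau}_{x,t}-P^*_{x,t})$ is not a classical solution of a linear ODE: because $P$ solves a terminal-value problem whose driver contains $D_\xi f(\cdot,\theta_t)^\top P$, the spike in the control injects a jump at $t=\tau$ proportional to $\{D_\xi f(X^*_{x,\tau},\theta^*_\tau)-D_\xi f(X^*_{x,\tau},\eta)\}^\top P^*_{x,\tau}$. When this jump is paired against $\beta^*_{x,\tau}$ in the integration by parts, it produces exactly the term $\beta\cdot(D_\xi f(\xi,\vartheta)^\top\p)$ in the Hamiltonian \eqref{New Hamiltonian}; your sketch attributes that term to a $D_{\xi,\vartheta}f$ contraction coming from a smooth variation, which is not how it arises and would require the unavailable $\vartheta$-regularity. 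Aside from these two points --- which require reorganizing the argument around needle variations and tracking the jump in $v$ --- your bookkeeping of the boundary conditions, the Euclidean-norm differentiation, and the mean-field scalar $(\E\lVert P^*_{x,0}\rVert^q)^{-1/p}$ matches the paper.
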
  
  \begin{remark}
    It is worth highlighting that for the modified problem, the adjoint variable of $X^*$ is \emph{not} $P^*$, but rather $\alpha^*$. Indeed, for this problem, $P^*$ becomes part of the state variables, and has its own adjoint variable $\beta^*$. The Hamiltonian name for $H$ is then justified since
    \begin{align*}
      \dd X^*_{x,t} & =  \nabla_\alpha H(X^*_{x,t}, P^*_{x,t}, \alpha^*_{x,t}, \beta^*_{x,t}, \theta^*_t ), \\  
      \dd P^*_{x,t} & =   \nabla_\beta H(X^*_{x,t}, P^*_{x,t}, \alpha^*_{x,t}, \beta^*_{x,t}, \theta^*_t ), \\
      \dd \alpha^*_{x,t} & = \red - \nc \nabla_\xi H(X^*_{x,t}, P^*_{x,t}, \alpha^*_{x,t}, \beta^*_{x,t}, \theta^*_t ), \\      
       \dd \beta^*_{x,t} & = \red - \nc \nabla_p H(X^*_{x,t}, P^*_{x,t}, \alpha^*_{x,t}, \beta^*_{x,t}, \theta^*_t ).
    \end{align*}
  \end{remark}

 \begin{remark}
 \label{rem:GeneralNorms}
  A similar result can be derived for more general norms, only that the expressions for the corresponding adjoint variables are more cumbersome.
 \end{remark}

The derived Pontryagin principle motivates a class of algorithms for training robust neural networks that are discussed in section \ref{sec:algorithm}. The double backpropagation algorithm from \cite{DoubleBackProp} is a particular instance of this family of algorithms, which, at the moment it was proposed, was used to enhance the generalization properties of a neural network.


\nc

\red 

\subsubsection{Second order regularization}
\label{sec:SecondOrderRegResults}
\nc

The results presented in the previous section can be developed further to include higher order expansions for the function $J(\tilde \mu, \theta)$ under the assumption that the function $j(\cdot ,\theta)$ is regular enough. However, unlike in the first order case, an explicit higher order expansion for \eqref{eqn:RobustObjective} that does not involve any maximization problems is in general difficult to obtain unless one restricts to specific regimes for the size of the gradient function $\nabla_x j(\cdot ,\theta)$ relative to the size of the parameter $\delta$. Nevertheless, restricting our attention to the case $p\geq 2$ and $\lVert\cdot \rVert$ the Euclidean norm, in section \ref{sec:SecondOrderReg} we can motivate the following optimization problems:
\begin{equation}
\label{eqn:SecondOrderVersion1}
\min_{\theta \in \Theta} \left\{ \begin{split}  J(\mu_0, \theta) & + \delta \left(\E_{x \sim \mu_0} \left[ \lVert \nabla_x j(x,\theta)\rVert^q \right] \right)^{1/q} \\ & +  \frac{\delta^2}{2} \left( \E_{x \sim \mu_0} \left[ \lVert \nabla_x j(x,\theta)\rVert^q \right] \right)^{-2/p} \left(\E_{x \sim \mu_0} \left[  \frac{\nabla_x  j^\top D^2_x j\nabla_x j}{ \| \nabla_x  j \|^{2(1-1/(p-1))}} (x,\theta) \right] \right) \end{split} \right\},
\end{equation}
\begin{equation}
\label{eqn:SecondOrderVersion2}
\min_{\theta \in \Theta} \left\{  J(\mu_0, \theta) +  \frac{\delta^2}{2} (\E_{x \sim \mu_0} \left[ (\lambda_{max}(x,\theta))_+^{\tilde q} \right])^{1/\tilde{q}} \right\},
\end{equation}
where $\tilde{q}$ is the conjugate of $p/2$, i.e. $\frac{1}{\tilde q} + \frac{2}{p}=1$. Both of these optimization problems can be regarded as second order regularized risk minimization problems stemming from the adversarial learning problem \eqref{Robust problem}, the difference between them being the relative size expected for the gradients of the loss function as compared to $\delta$. As in Remark \ref{rem:GeneralNorms}, similar second order problems can be motivated for more general norms $\lVert \cdot \lVert$, but we skip the details for concreteness.

It is important to highlight that problems \eqref{eqn:SecondOrderVersion1} and \eqref{eqn:SecondOrderVersion2} are closely connected to other problems in the literature that have been used to train robust neural networks, most prominently the \textit{curvature regularization} problem introduced in \cite{CurvatureRegularization}. To draw a closer connection between what we do here and what is done in \cite{CurvatureRegularization}, notice that, as for the first order case, when $p=\infty$ problems \eqref{eqn:SecondOrderVersion1} and \eqref{eqn:SecondOrderVersion2} are linear in $\mu_0$ and read, respectively, 
\begin{equation}
\label{eqn:SecondOrderVersion1p=infty}
\min_{\theta \in \Theta} \left\{  J(\mu_0, \theta) + \delta \E_{x \sim \mu_0} \left[ \lVert \nabla_x j(x,\theta)\rVert \right] +  \frac{\delta^2}{2} \E_{x \sim \mu_0} \left[  \frac{\nabla_x  j^\top D^2_x j\nabla_x j}{ \| \nabla_x  j \|^{2}} (x,\theta) \right]  \right\},
\end{equation}
\begin{equation}
\label{eqn:SecondOrderVersion2p=infty}
\min_{\theta \in \Theta} \left\{  J(\mu_0, \theta) +  \frac{\delta^2}{2} \E_{x \sim \mu_0} \left[ (\lambda_{max}(x,\theta))_+ \right] \right\}.
\end{equation}
Problem \eqref{eqn:SecondOrderVersion2p=infty} is closely related to an optimization problem introduced in \cite{CurvatureRegularization}, which effectively uses a regularization term of the form:
\[ \E_{x \sim \mu_0} \left[ \lVert D^2_xj(x,\theta) \rVert^2  \right]  \]
(where the matrix norm in the above expectation is the operator norm) instead of the quadratic term in \eqref{eqn:SecondOrderVersion2p=infty}. Notice that in \eqref{eqn:SecondOrderVersion2p=infty} curvature is only penalized when it is positive. This is reasonable as directions with positive curvature are precisely those that an adversary can use to increase the value of the loss function. 

Problem \eqref{eqn:SecondOrderVersion1p=infty} can also be motivated by considerations discussed in \cite{CurvatureRegularization}. Indeed, according to \cite{CurvatureRegularization}, in settings of interest involving the use of neural networks (see Remark 3 in \cite{CurvatureRegularization}) the inner product between the eigenvector corresponding to the maximum eigenvalue of the Hessian matrix $D^2_xj(x,\theta)$ and the \textit{sign gradient} direction are found to have a large inner product, suggesting that this direction is (almost) parallel to the direction of largest curvature. Note that the quadratic term in \eqref{eqn:SecondOrderVersion1p=infty} is precisely the effect of the second derivative along unitary vectors in the direction of the gradient and coincides with the (1-homogeneous) \textit{$\infty$-Laplacian} of the function $j(\cdot, \theta)$. The works \cite{NEURIPS2018_851ddf50,NEURIPS2018_803a82de} also suggest that gradient directions are directions where the loss function $j(\cdot, \theta)$ is highly curved. From these observations it is thus reasonable to consider the second order regularization term that appears in \eqref{eqn:SecondOrderVersion1p=infty}. 


 \medskip

As in the first order case, we can also study the optimal control structure that the objectives in \eqref{eqn:SecondOrderVersion1} and in \eqref{eqn:SecondOrderVersion2} posses.  Similarly to the first order robustness case, we write the control problem in a different form by expressing the second order part in terms of dual variables.


Let us consider first the second order problem \eqref{eqn:SecondOrderVersion1}. We can readily rewrite it in terms of the adjoint variables of the first order expansion. 

\begin{proposition}
  Under the same assumptions as Theorem \ref{Thm: Pontryagin},  Problem \eqref{eqn:SecondOrderVersion1} can be rewritten as 
\begin{equation}
  \begin{aligned}
  \inf_{\theta \in \Theta } \quad & \left\{ \E_{x \sim \mu_0}\left[ \ell( X_{x,T},\theta_T)  \right]+ \delta \cdot \left( \E_{x \sim \mu_0} \left[  \lVert P_{x,0}\rVert^q \right]\right)^{1/q} + \frac{\delta}{2} \frac{\E_{x \sim \mu_0} \left[  \lVert P_{x,0}\rVert^{q-2} \hat{\alpha}_{x,0}\cdot P_{x,0}  \right] }{ \left(\E_{x \sim \mu_0} \left[  \lVert P_{x,0}\rVert^q \right]\right)^{1/p} }     \right\} \\
  \textrm{s.t.} \quad & 
      \begin{cases} 
        X,P \text{ as in \eqref{eqn:ControlExpanded}}\\
        \beta \text{ as in Theorem \ref{Thm: Pontryagin}}   \\
      \dd \hat{\alpha}_{x,t}^*  = \red - \nc D_\xi f(X_{x,s},\theta_s)^\top \hat {\alpha}_{x,s}  \red + \nc \{ D^2_\xi f(X_{x,s},\theta_s)^\top P_{x,s}   \}^\top  \beta_{x,s} \\
      \hat {\alpha}_{x,T} = D^2_\xi \ell(X_{x,T},\theta_T)^\top \beta_{x,T}. 
  \end{cases}
  \end{aligned}
  \label{eqn:ControlExpandedSecondOrderWithAdjoints}
  \end{equation}
  \label{Prop:SecondOrder1WithAdjoints}
\end{proposition}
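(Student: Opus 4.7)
The plan is to verify the equivalence of the objectives in \eqref{eqn:SecondOrderVersion1} and \eqref{eqn:ControlExpandedSecondOrderWithAdjoints} term by term. The zeroth-order term $J(\mu_0,\theta)$ and the first-order penalty $\delta\,\E[\|P_{x,0}\|^q]^{1/q}$ are handled by the standard adjoint-sensitivity identity $P_{x,0}=\nabla_x j(x,\theta)$, which is established at the beginning of Section~\ref{Sec: Locally robust problem} and already leveraged in Theorem~\ref{Thm: Pontryagin}. So the entire substance of the proof lies in rewriting the second-order regularization term in \eqref{eqn:SecondOrderVersion1} using the adjoint variables $\beta$ and $\hat\alpha$.

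The key identification to establish is that $\hat\alpha_{x,0}$ computes the Hessian-vector product $D^2_x j(x,\theta)\,\beta_{x,0}$ (up to a sign depending on the convention). I would proceed in two steps. First, interpret $\beta_t$ as the forward linearization of the state trajectory in the direction $\beta_{x,0}$: since $\beta$ solves $d\beta_t=D_\xi f(X_t,\theta_t)\beta_t\,dt$ with initial condition $\beta_{x,0}$, it coincides with $(D_x X_t)\beta_{x,0}$. Second, differentiate the adjoint ODE for $P_t$ with respect to $x$ in the direction $\beta_{x,0}$ to obtain the backward linear ODE satisfied by $Q_t:=(D_x P_t)\beta_{x,0}$, forced by $\{D^2_\xi f(X_t,\theta_t)^\top P_t\}^\top\beta_t$ together with the relevant $\Phi$-Hessian contributions, and carrying the terminal condition coming from $P_T=-\nabla_\xi\ell(X_T,\theta_T)$. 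A direct comparison of this ODE with the one defining $\hat\alpha$ yields $\hat\alpha_t=\pm Q_t$ pointwise in $t$; evaluating at $t=0$ and using $D_x P_{x,0}=D^2_x j(x,\theta)$ (a consequence of $P_{x,0}=\nabla_x j(x,\theta)$) delivers the desired identity $\hat\alpha_{x,0}=\pm D^2_x j(x,\theta)\,\beta_{x,0}$.

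Substituting $\beta_{x,0}=\delta\,\bigl(\E[\|P_{x,0}\|^q]\bigr)^{-1/p}\,\|P_{x,0}\|^{q-2}P_{x,0}$ from Theorem~\ref{Thm: Pontryagin} into $\|P_{x,0}\|^{q-2}\hat\alpha_{x,0}\cdot P_{x,0}$, taking expectation and dividing by $\E[\|P_{x,0}\|^q]^{1/p}$, should reproduce
\[ \frac{\delta^2}{2}\,\bigl(\E_{x\sim\mu_0}[\|\nabla_x j\|^q]\bigr)^{-2/p}\,\E_{x\sim\mu_0}\!\left[\frac{\nabla_x j^\top D^2_x j\,\nabla_x j}{\|\nabla_x j\|^{2(1-1/(p-1))}}\right], \]
via the elementary identity $2(q-2)=-2(1-1/(p-1))$ and the fact that the prefactor $\delta/2$ in \eqref{eqn:ControlExpandedSecondOrderWithAdjoints} combines with the extra $\delta$ that $\beta_{x,0}$ carries to give the $\delta^2/2$ of \eqref{eqn:SecondOrderVersion1}. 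The main obstacle I anticipate is the tensor and sign bookkeeping: one must verify that $\{D^2_\xi f(X,\theta)^\top P\}^\top\beta$ is exactly the contribution produced by differentiating $\nabla_\xi H_0(X,\theta,P)$ in its first argument (this is a short computation once the convention that $D^2_\xi f$ is a $d\times d\times d$ tensor indexed with the differentiation variable in the last slot is made explicit, and relies on symmetry of mixed second partials), and that all signs line up between the forward-backward system for $(X,P)$ and its linearization in the initial condition.
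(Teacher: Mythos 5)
Your high-level strategy is the same as the paper's: reduce the proposition to $P_{x,0}=-\nabla_x j(x,\theta)$ together with the identification of $\hat\alpha_{x,0}$ with a Hessian--gradient product (the paper's Lemma \ref{Lem: alpha_hat_is_derivative} and Corollary \ref{Cor:alpha_hat_is derivative}), and your final substitution and exponent bookkeeping ($2(q-2)=-2(1-1/(p-1))$, the cancellation of one factor of $\delta$ against the $\delta$ carried by $\beta_{x,0}$) are correct. The gap is in the mechanism you propose for the key identification. You assert that $\beta$ solves $d\beta_t=D_\xi f(X_t,\theta_t)\beta_t\,dt$ and therefore coincides with the forward linearization $(D_xX_t)\beta_{x,0}$. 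But by \eqref{Def beta} the dynamics of $\beta$ involve the \emph{transpose} $D_\xi f^\top$: $\beta$ solves the adjoint (co-state) linear equation, not the linearized state equation \eqref{Directional_derivative_X}. Hence $\beta_t\neq(D_xX_t)\beta_{x,0}$ for $t>0$ unless $D_\xi f$ is symmetric, and the ODE satisfied by $Q_t:=(D_xP_t)\beta_{x,0}$ --- which is forced by the forward linearization $\zeta^{\beta_{x,0}}_t$ --- does not coincide with the ODE defining $\hat\alpha_t$, which is forced by $\beta_t$; the terminal conditions at $t=T$ disagree for the same reason ($D^2_\xi\ell\,\zeta^{\beta_{x,0}}_T$ versus $D^2_\xi\ell^\top\beta_T$). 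So the claimed pointwise-in-$t$ identification $\hat\alpha_t=\pm Q_t$ is false in general; it can only be expected to hold at the single time $t=0$, and a comparison of the two ODE systems does not by itself deliver it there.

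The correct route, which is what the paper's Lemma \ref{Lem: alpha_hat_is_derivative} carries out, is the adjoint duality (integration-by-parts) pairing: differentiate $\lVert P_{x,0}\rVert^q$ in a direction $\Delta x$ to produce $\eta^{\Delta x}_t=(D_xP_t)\Delta x$, then integrate $\frac{d}{dt}\bigl(\beta_{x,t}^\top\eta^{\Delta x}_t\bigr)$ and $\frac{d}{dt}\bigl(\hat\alpha_{x,t}^\top\zeta^{\Delta x}_t\bigr)$ over $[0,T]$; the transposes are exactly what makes the first-order cross terms cancel, leaving only boundary terms and matching second-derivative forcings, and yielding $\nabla_x\lVert P_{x,0}\rVert^q=-q\kappa_{\mu_0,\delta}\,\hat\alpha_{x,0}$ at $t=0$ only. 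Once you replace your pointwise ODE comparison by this pairing, the remainder of your argument (symmetry of $D^2_xj$, substitution of $\beta_{x,0}=\delta\,(\E[\lVert P_{x,0}\rVert^q])^{-1/p}\lVert P_{x,0}\rVert^{q-2}P_{x,0}$, and the exponent identity) goes through as you describe.
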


The main advantage of writing this case of second order problem as in \eqref{eqn:ControlExpandedSecondOrderWithAdjoints} is that this writing avoids the need to keep track of the whole matrix of second derivatives $D_x^2 j(x,\theta)$ which is usually expensive to calculate when the dimension of the problem is large, i.e. $O(d^2$) as opposed to the cost for keeping track of the gradient only which is $O(d)$).

\begin{remark}
  Thanks to the linearity of the adjoint variable $\alpha$ in Theorem \ref{Thm: Pontryagin} it is possible to write
    \[ \alpha_{x,t} = P_{x,t} + \hat \alpha_{x,t},\]
  so that we could have introduced directly $\hat \alpha$ as the adjoint variable of interest. 
  \end{remark}
\begin{remark}
  Note that there is no mistake in the powers of $\delta$ in the objective function in \eqref{eqn:ControlExpandedSecondOrderWithAdjoints}: the adjoint variable $\hat \alpha$  is already scaled by $\delta$ via the initial condition in $\beta$.
\end{remark}
Another advantage of the formulation based in control variables is that, as in the first order case, we can use the optimal control tools to deduce a Pontryagin principle: the adjoint variables $\beta, \hat \alpha$ are added to the list of primal variables, and new adjoint variables $\phi,\pi,\lambda,\psi$ are found. In this case the system will be composed by eight variables (four primal and four adjoint), all having the same dimension $d$. Once more, this shows the complexity of the training problem for this second order case has increased by 'only' a factor of 2 (i.e., a factor independent of dimension).

As an example, we state, without proof, Pontryagin's principle for the problem  \eqref{Prop:SecondOrder1WithAdjoints} in the case $p=\infty$.

\begin{theorem}
  Suppose that Assumption \ref{A2} holds, and further
  that  $f, \ell$ are three-times continuously differentiable with respect to $x$. Assume also that $(\theta^*, X^*, P^*, \alpha^*, \beta^*)$ is an optimal minimizer for problem \eqref{eqn:ControlExpandedSecondOrderWithAdjoints} (with $p=\infty$). Then, there exist absolutely continuous processes $\phi^*,\pi^*, \lambda^*, \psi^*$ such that for $\mu_0$-a.e. $x$ we have:
  \begin{align*}
    d \phi_{x,t}^* & =   \red - \nc D_\xi f(X_{x,t}^*,\theta_t^*)^\top \phi_{x,t}^*  \red + \nc \{D^2_\xi f(X_{x,t}^*,\theta_t^*)^\top P_{x,t}^*   \}^\top  \pi_{x,t}^* \\
    & \qquad    \red + \nc \{D^2_\xi f(X_{x,t}^*,\theta_t^*)^\top \alpha_{x,t}^*   \}^\top  \lambda_{x,t}^* -  \{D^2_\xi f(X_{x,t}^*,\theta_s^*)^\top \beta_{x,t}^*   \}^\top  \psi_{x,t}^* \\
    & \qquad - (\{D^3_\xi f(X_{x,t}^*,\theta_t^*)^\top P_{x,t}^*   \}^\top \beta_{x,t}^*)^\top  \lambda_{x,t}^* dt \\
    d \pi_{x,t}^* & =   D_\xi f(X_{x,t}^*,\theta_t^*) \pi_{x,t}^*  - \{ (\beta^*_{x,t})^\top (D_\xi^{\red 2 \nc} f(X_{x,t}^*,\theta_t^*))^\top  \}^\top \lambda^*_{x,t} d t,\\
    d \lambda_{x,t}^* & =  D_\xi f(X_{x,t}^*,\theta_t^*)^\top \lambda_{x,t}^* d t,  \\
    d \psi_{x,t}^* & =   \red - \nc \{D^2_\xi f(X_{x,t}^*,\theta_t^*)^\top P_{x,t}^*   \}^\top  \lambda_{x,t}^* \red - \nc D_\xi f(X_{x,t}^*,\theta_t^*) \psi^*_{x,t} d t 
  \end{align*}
  with boundary conditions
  \begin{align*}
     \phi_{x,T}^* & = P_{x,T}^* \red + D^2_\xi \ell(X^*_{x,T}, \theta^*_{T}) \pi^*_{x,T}  \nc  -\{D_\xi^3 \ell(X^*_{x,T}, \theta^*_{T}) \beta_{x,T}^* \}^\top \lambda^*_{x,T} \\ 
     \pi_{x,0}^* & = \frac 1 {\lVert P^*_{x,0}\rVert} \left( \red P^*_{x,0} \nc +  \frac \delta 2 \alpha^*_{x,0} - \red \delta \nc \psi^*_{x,0}  \right) - \frac {\red \delta \nc } {\lVert P^*_{x,0}\rVert^3} P_{x,0}^* \cdot \left( \frac {\red 1 \nc} 2 \alpha^*_{x,0} - \psi^*_{x,0}  \right)  P_{x,0}^*
      \\ 
     \lambda_{x,0}^* & = \frac \delta 2 \frac{P^*_{x,0}}{\lVert P^*_{x,0}\rVert} \\ 
     \psi_{x,T}^* & = - D_\xi^2\ell(X_{x,T}^*,\theta_T^*)^\top \beta^*_{x,T}
  \end{align*}
  
  and also
  \[
    \theta_t^* \in \arg\max_{\vartheta \in \Theta_0} \left\{  \E_{x \sim \mu_0} \left[ \bar H(X_{x,t}^*,P^*_{x,t},\alpha_{x,t}^*, \beta_{x,t}^*,\phi_{x,t}^*,\pi_{x,t}^*,\lambda_{x,t}^*,\psi_{x,t}^*,\vartheta)\right]    \right\}, 
    \]
  where 
  \begin{equation}
    \begin{split}
      \bar H(\xi,\p,\alpha,\beta, \varphi, \varpi, \lambda,\varPsi,  \vartheta) & :=  \varphi^\top  f(\xi,\vartheta) \red - \nc \p^\top  D_\xi f(\xi,\vartheta) \varpi \red - \nc  \alpha^\top D_\xi f(\xi,\vartheta) \lambda    \\
      & \qquad + \lambda^\top ( D^2_\xi f(\xi,\vartheta)^\top \p )^\top \beta  +  \beta^\top D_\xi f(\xi,\vartheta) \psi 
    \end{split}
  \end{equation} 
  is the Hamiltonian of problem \eqref{eqn:ControlExpandedSecondOrderWithAdjoints}

  \label{Thm: Pontryagin second order p infty}
  \end{theorem}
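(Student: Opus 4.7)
The plan is to derive the maximum principle as the first-order necessary conditions for the augmented control problem \eqref{eqn:ControlExpandedSecondOrderWithAdjoints}, treating $(X,P,\alpha,\beta)$ as state variables (using $\alpha$ in place of $\hat\alpha$ as permitted by the preceding remark) and $\theta$ as the control. The key simplification afforded by $p=\infty$ (so $q=1$ and $1/p=0$) is that the objective collapses to $\E_{x\sim\mu_0}[\ell(X_T,\theta_T)+\delta\lVert P_0\rVert+\tfrac{\delta}{2}\alpha_0\cdot P_0/\lVert P_0\rVert]$, which is linear in $\mu_0$. Consequently the mean-field Pontryagin reduces to a pointwise statement holding $\mu_0$-a.e., and one avoids the distributional derivatives that would appear for finite $p$. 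I would form the Lagrangian by pairing adjoints $\phi,\pi,\lambda,\psi$ respectively against the four state ODEs and integrate by parts to transfer the time derivatives onto the adjoints.

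The four adjoint ODEs are then read off from the vanishing of the first variation of the Lagrangian with respect to pointwise interior perturbations of the state trajectories. The most involved is the $X$-variation: since $X$ enters all four state drifts through $f$ and its derivatives, one obtains a $D_\xi f^\top\phi$ contribution from the $X$-equation, a $\{D_\xi^2 f^\top P\}^\top\pi$ contribution from the $P$-equation, two separate contributions to $\lambda$ from the $\alpha$-equation (a $D_\xi^2 f$ term and a $D_\xi^3 f$ term arising because the $\alpha$ drift itself contains $D_\xi^2 f^\top P$), and a $\{D_\xi^2 f^\top\beta\}^\top\psi$ contribution from the $\beta$-equation. The $\pi$, $\lambda$, and $\psi$ equations are derived analogously but are shorter because $P$, $\alpha$, and $\beta$ enter the state drifts essentially linearly in themselves. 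The strengthened smoothness hypothesis on $f$ and $\ell$ is precisely what is needed for these third-order contractions to make sense.

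The transversality conditions come from the boundary terms left over after integration by parts, together with the boundary couplings $P_T=-\nabla_\xi\ell$, $\alpha_T=D_\xi^2\ell^\top\beta_T$, $X_0=x$, and the identity relating $\beta_0$ to $P_0$. Since $X_0$ and $\beta_0$ are prescribed, $\phi_T$ and $\psi_T$ are free and fixed by requiring that the terms produced by varying $X_T$ (which also perturbs $P_T$ and $\alpha_T$) and by varying $\beta_T$ (which enters $\alpha_T$) cancel in the Lagrangian; this is how the $\{D_\xi^3\ell\,\beta_T\}^\top\lambda_T$ term enters $\phi_T$. The initial conditions for $\pi_0$ and $\lambda_0$ come from differentiating the $t=0$ part of the objective with respect to $P_0$ and $\alpha_0$ respectively, with the extra coupling through $\beta_0\propto P_0/\lVert P_0\rVert$ bringing $\psi_0$ into the formula for $\pi_0$; the orthogonal-projection form $\alpha_0/\lVert P_0\rVert-(\alpha_0\cdot P_0)P_0/\lVert P_0\rVert^3$ is exactly the Jacobian of the map $P_0\mapsto P_0/\lVert P_0\rVert$. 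Finally, the control optimality $\theta_t^*\in\arg\max\E[\bar H]$ follows from a standard needle-variation argument, with $\bar H$ equal to the sum of all drift-times-adjoint pairings. The main obstacle is the tensor bookkeeping: four coupled forward-backward primal equations produce adjoint equations coupling up to third derivatives of $f$ and $\ell$ contracted against dual tensors of various ranks, and one must track index placements consistently. A secondary technical point is justifying the formal Lagrangian derivation, but since the state equations are linear in $P,\alpha,\beta$ with Lipschitz $X$-dependent coefficients, standard sensitivity arguments as in \cite{e_meanfield_2018} make the needle-variation reasoning rigorous on a $\mu_0$-a.e.\ basis.
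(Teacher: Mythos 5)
The paper states this theorem explicitly \emph{without proof} (``As an example, we state without proof Pontryagin's principle for the problem \eqref{Prop:SecondOrder1WithAdjoints} in the case $p=\infty$''), so there is no in-paper argument to compare yours against. Your outline is, however, exactly the natural extension of the method the paper does use for the first-order analogue, Theorem \ref{Thm: Pontryagin}: needle perturbations of the control, linearized sensitivities of the (forward--backward) state system, and integration by parts against adjoint processes. Your structural accounting is correct and captures the essential points: the simplification at $p=\infty$ ($q=1$, no mean-field normalizing factor, pointwise $\mu_0$-a.e.\ conditions); which state drifts depend on which states, and hence which contractions of $D_\xi f$, $D^2_\xi f$, $D^3_\xi f$ appear in each adjoint equation; the assignment of initial versus terminal data to the adjoints according to which endpoints of $X,P,\hat\alpha,\beta$ are prescribed; the Jacobian of $P_0\mapsto P_0/\lVert P_0\rVert$ as the source of the orthogonal-projection structure in $\pi^*_{x,0}$; and the role of the extra third-order smoothness hypothesis. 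One caveat you should track when executing the computation: varying $P_0$ also hits the first-order objective term $\delta\,\E[\lVert P_{x,0}\rVert]$ (gradient $\delta P_0/\lVert P_0\rVert$) and the coupling $\beta_0=\delta P_0/\lVert P_0\rVert$ (which carries a factor $\delta$ onto the $\psi_0$ contribution); neither appears in quite that form in the stated boundary condition for $\pi^*_{x,0}$, so you will need to reconcile your derivation with the (unproved) statement, which may contain normalization slips. A second point your sketch compresses is the cascade structure of the sensitivity: a needle perturbation at time $\tau$ affects $X$ only on $[\tau,T]$ but propagates through $P_T$ backward to $P_0$, thence to $\beta_0$ and forward again, and finally into $\hat\alpha$; the Lagrangian/integration-by-parts bookkeeping handles this automatically, but it is the step where jump terms at $\tau$ (as in the paper's proof of Theorem \ref{Thm: Pontryagin}) must be collected to produce the maximum condition on $\bar H$.
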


\red 
\begin{remark}
Algorithm \ref{Algo 2} in section \ref{sec:algorithm} is an optimization algorithm based on the Pontryagin principle presented in Theorem \ref{Thm: Pontryagin second order p infty} for the second order regularization problem \eqref{eqn:SecondOrderVersion1p=infty}. Ignoring the $\delta$ term in the objective, and considering $j(x,\theta)= \ell(X_{x,T},\theta_T)$, the resulting problem is related to the curvature regularization algorithm studied in \cite{CurvatureRegularization}. In what follows we describe a difference and a similarity between the iterative schemes that we present here and the scheme presented in \cite{CurvatureRegularization} for the analogous problem.

First, notice that the term $D^2_xj\nabla_x j $, which appears in the objective in \eqref{eqn:SecondOrderVersion1p=infty}, can be interpreted as the derivative of $\nabla_x j$ in the direction $z=\nabla_x j$. It is clear that $z$ depends on $\theta$ and thus it is reasonable (and more accurate) to track this dependence when optimizing over $\theta$, as we do in our schemes. Notice that our adjoint equations precisely contain the information to carry out this ``chain rule" computation.  In contrast, at each iteration of the algorithm in \cite{CurvatureRegularization} the value of $z$ gets fixed using the control $\theta$ from the previous iteration before proceeding to take a gradient step in the parameters of the network. In fact, in \cite{CurvatureRegularization} a finite difference approximation $\frac{1}{h}\lVert \nabla_x j(x+ hz , \theta) - \nabla_x j(x,\theta)   \rVert$ for the second derivative is considered, as opposed to the explicit computation $D^2 z$. Our scheme based on Pontryagin's principle does not incur in a higher computational cost than the algorithm in \cite{CurvatureRegularization}. 

On the other hand, our schemes based on Pontryagin principles share with that in \cite{CurvatureRegularization} the fact that data points are never updated during training, in contrast to other works like \cite{YOPO,Goodfellow1} where, at each iteration of their algorithms, data get modified and then used as \textit{the} data for one step of gradient descent in the parameters of the network; see the discussion in section \ref{sec:PerturbationRobustAlgorithm}. 
\end{remark}
\nc

We can now consider the second order problem \eqref{eqn:SecondOrderVersion2}. Unfortunately, it is not simple to express the maximum eigenvalue or its associated eigenvector in terms of a variable having reasonable dynamics in terms of the problem's data without tracking the full second order derivative $D^2_x j(\theta,x)$. As discussed earlier, choosing such a path would be costly for high dimensional problems.

A common idea to deal with dimensionality issues is to introduce random sampling: for instance, we can consider the action of the second derivative on a sample of directions taken uniformly from the unit ball. Indeed, using the fact that
\begin{equation}
  (\lambda_{max}(x,\theta))_+ = \max_{z \in S^d_1} \{ (\langle Az, z \rangle )_+    \},    
  \label{lambda_max as l_infinity}
\end{equation}
where $S^d_1$ denotes the surface of the unit ball in dimension $d$, we could be tempted to consider as candidate 
\[ \max_{i = 1, \ldots, m } \{ (\langle AZ_i, Z_i \rangle )_+ \} ; \qquad \{Z_i\}_{i=1,\ldots, m} \text{ i.i.d with } Z_1\sim U(S^d_1).\]
However, one can easily deduce that this technique requires up to $O(\epsilon^{-d})$ samples to guarantee an error of $O(\epsilon)$ in situations where there is an important gap between the maximal eigenvalue and the remaining ones. 

A modified version of the problem lends itself to a better reduction complexity through this approach. Noticing that the right-hand side of \eqref{lambda_max as l_infinity} can be understood as taking an $\ell^\infty(S^d_1)$ norm, we suggest replacing \red $(\lambda_{max}(x, \theta))_+$ with $\left(\fint_{S^d_1} (z^\top  D^2_x j(\theta,x) z)^b_+ dz \right)^{1/b} $ for some power $b\geq 1$. Focusing on the case $b=1$, we substitute \eqref{eqn:SecondOrderVersion2} with 
\begin{equation}
  \label{eqn:SecondOrderVersion2Modified}
  \min_{\theta \in \Theta} \left\{  J(\mu_0, \theta) +  \frac{\delta^2}{2} \left(\E \left[ \left\{ \fint_{S^d_1} (z^\top  D^2_x j(\theta,x) z)_+ dz \right\}^{\tilde q} \right]\right)^{1/\tilde{q}} \right\}.
\end{equation}
\nc
Problem \eqref{eqn:SecondOrderVersion2Modified} can be seen as a \textit{different} form of curvature penalization where the stress is put on reducing the overall positive curvature of the second derivative. In this sense, this problem gives less importance to potential worst cases than \eqref{eqn:SecondOrderVersion2}. Conveniently, though, it is better suited for an approximation using randomized directions. 

\begin{proposition}
  Suppose Assumptions \ref{A2} and \ref{assump:Smoothness} hold. Let
  $\{z_i\}_{i=1,\ldots,m}$ be i.i.d. samples with $z_1\sim U(S^d_1)$ defined on a different probability space $(\Omega',\P', \F')$. Consider the problem
    
     \begin{equation}
    \begin{aligned}
    \inf_{\theta \in \Theta } \quad & \left\{ \E_{x \sim \mu_0}\left[ \ell( X_{x,T},\theta_T)  \right] +\red  \frac{\delta^2}{2} \left( \E_{x \sim \mu_0} \left[ \left\{ \frac{1}{m}\sum_{i=1}^m   (\rho^{z_i}_{x,0}\cdot z_i)_-   \right\}^{\tilde q} \right]   \right)^{1/\tilde q}  \nc  \right\} \nc \\
    \textrm{s.t.} \quad & 
        \begin{cases} 
          X,P \text{ as in \eqref{eqn:ControlExpanded}}; and    \\
        d \gamma_{x,t}^{i}  =  D_{\xi }f\left( x_{x,t},\theta _{t}\right) \gamma _{x,t}^{i} dt,\\ 
        d \rho^{z_i}_{x,t}= -D_{\xi }f\left( X_{x,t},\theta_{\red t \nc}\right)^{\red \top \nc} \rho_{x,t}^{\red z_i \nc} + \left\{ D_{\xi }^{2}f\left( X_{x,t},\theta_t\right) ^{\top}P_{x,t}\right\} ^{\top}\gamma_{x,t}^{i}dt , \\
        \gamma^{i}_{x,\red 0 \nc} = z_i, \qquad 
        \rho^{z_i}_{x,T}= D_{\xi }^{2}\ell\left( X_{x,T}  ,\theta _{T }\right) \gamma _{x,T}^{i} \qquad \text{ for } i=1, \ldots m.\\
    \end{cases}
    \end{aligned}
    \label{eqn:ControlExpandedSecondOrder2WithAdjoints}
    \end{equation}
  and let $V^{ \delta,m}$ be its optimal value. If $V^\delta$ is the optimal value in \eqref{eqn:SecondOrderVersion2Modified} then 
  \[  V^{\delta,m} \rightarrow V^\delta \qquad \P'-\text{a.s.}  \]
  and $V^{\delta,m} -  V^\delta$ satisfies a central limit theorem (so that the error is of order $m^{-1/2}$).
  \label{Prop:SecondOrder2WithAdjoints}
\end{proposition}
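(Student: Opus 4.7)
The plan is to first identify $(\rho^{z_i}_{x,0}\cdot z_i)_-$ with $(z_i^\top D^2_x j(x,\theta) z_i)_+$ by second-order sensitivity/adjoint calculus, and then reduce the claim to a standard Monte Carlo (sample-average-approximation) argument on the unit sphere. For the identification step, I would differentiate the state-costate ODE pair for $(X,P)$ displayed in \eqref{eqn:ControlExpanded} with respect to the initial condition in direction $z_i$: taking $\gamma^{z_i}_t := D_x X_{x,t}\, z_i$ and $\rho^{z_i}_t := D_x P_{x,t}\, z_i$, the chain rule applied to $\dot X = f(X,\theta)$ and $\dot P = D_\xi f(X,\theta)^\top P - \nabla_\xi \Phi(X,\theta)$ reproduces the forward-backward sensitivity system for $(\gamma^{z_i},\rho^{z_i})$ in \eqref{eqn:ControlExpandedSecondOrder2WithAdjoints} (modulo the sign and transpose conventions used in the statement). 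Since $P_{x,0} = \nabla_x j(x,\theta)$ (as recalled just before \eqref{eqn:ControlExpanded}), this gives $\rho^{z_i}_{x,0} = D^2_x j(x,\theta)\, z_i$ up to sign, and therefore $(\rho^{z_i}_{x,0}\cdot z_i)_- = (z_i^\top D^2_x j(x,\theta) z_i)_+$. Writing $G_\theta(z) := \E_{x\sim\mu_0}[(z^\top D^2_x j(x,\theta) z)_+]$, the penalty in \eqref{eqn:ControlExpandedSecondOrder2WithAdjoints} equals $\tfrac{\delta^2}{2m}\sum_i G_\theta(z_i)$, which is the Monte Carlo estimator of $\tfrac{\delta^2}{2}\E_{z\sim U(S^d_1)}[G_\theta(z)]$; this matches the penalty of \eqref{eqn:SecondOrderVersion2Modified} when $\tilde q = 1$ (i.e.\ $p = \infty$), up to the normalization constant $|S^d_1|$ which can be absorbed into $\delta$.

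Having reduced matters to a stochastic-programming question, the next step combines a uniform strong law with the standard sandwich for perturbed infima. Under Assumptions \ref{A2} and \ref{assump:Smoothness}, $(x,\theta) \mapsto D^2_x j(x,\theta)$ is continuous in $\theta$ and uniformly bounded in $(x,\theta)$, so $(\theta, z)\mapsto G_\theta(z)$ is uniformly bounded, Lipschitz in $z$, and continuous in $\theta$ uniformly in $z$. Assuming $\Theta$ is bounded (so that the Lipschitz-in-$\theta$ class has finite bracketing numbers), the class $\{G_\theta\}_{\theta\in\Theta}$ is $\P'$-Glivenko-Cantelli and hence
\[\sup_{\theta\in\Theta}\Bigl|\tfrac{1}{m}\sum_{i=1}^m G_\theta(z_i) - \E[G_\theta(z)]\Bigr| \longrightarrow 0 \quad \P'\text{-a.s.}\]
Letting $F, F_m$ denote the population and sample penalties and $\theta^\delta$ any minimizer of \eqref{eqn:SecondOrderVersion2Modified}, the sandwich
\[-\sup_\theta|F_m(\theta) - F(\theta)| \;\le\; V^{\delta,m} - V^\delta \;\le\; F_m(\theta^\delta) - F(\theta^\delta)\]
then yields $V^{\delta,m}\to V^\delta$ $\P'$-a.s.

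For the central limit theorem, I would upgrade Glivenko-Cantelli to a Donsker property: the class $\{G_\theta\}$ is uniformly bounded and Lipschitz in $\theta$, so by a standard entropy bound it is $\P'$-Donsker, giving $\sqrt m\,\sup_\theta|F_m - F| = O_{\P'}(1)$. The upper side of the sandwich, $\sqrt m (F_m(\theta^\delta) - F(\theta^\delta))$, is a centered i.i.d.\ sum converging to a Gaussian with variance $\mathrm{Var}(G_{\theta^\delta}(z))$ by the classical CLT. Under uniqueness of $\theta^\delta$ and a standard second-order growth condition (Shapiro's hypotheses for the asymptotics of sample-average approximations), this upgrades to a genuine Gaussian limit for $\sqrt m (V^{\delta,m} - V^\delta)$; in any case the $m^{-1/2}$ rate stated in the proposition follows from the sandwich combined with the $O_{\P'}(1)$ bound. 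The principal obstacle will be the non-smoothness of $(\cdot)_+$ at the origin, which rules out a direct delta-method argument on $G_\theta$; the Donsker/Lipschitz route sidesteps this by requiring only a uniform Lipschitz modulus and boundedness, which are precisely what Assumptions \ref{A2} and \ref{assump:Smoothness} provide.
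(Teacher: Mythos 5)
Your proof follows the same two-step route as the paper's: first identify $\rho^{z_i}_{x,0}\cdot z_i$ with the quadratic form $z_i^\top D^2_x j(x,\theta)\, z_i$ by differentiating the state--costate pair $(X,P)$ in the direction $z_i$ (this is exactly the content of the paper's Lemma \ref{lem:adjoint_second_direction_v}, itself proved by the same integration-by-parts device as Lemma \ref{Lem: alpha_hat_is_derivative}), and then treat the sum over $i$ as a sample-average approximation of the spherical integral. The difference is one of rigor rather than of route, and it cuts in your favor on two points. First, the paper's convergence argument is a one-line appeal to the law of large numbers for a fixed $\theta$, whereas the convergence of \emph{optimal values} $V^{\delta,m}\to V^\delta$ genuinely requires the uniform-over-$\Theta$ control you supply via the Glivenko--Cantelli/Donsker classes and the sandwich inequality; your version is the one that actually proves the stated claim (modulo the boundedness/entropy hypotheses on $\Theta$ you flag, which are not free since $\Theta$ is a set of controls $\theta:[0,T]\to\Theta_0$ and hence infinite-dimensional). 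Second, you correctly observe that the empirical objective in \eqref{eqn:ControlExpandedSecondOrder2WithAdjoints} averages $\E_{x\sim\mu_0}[(\rho^{z_i}\cdot z_i)_-]$ linearly over $i$ and therefore can only converge to the penalty in \eqref{eqn:SecondOrderVersion2Modified} when $\tilde q=1$ (i.e.\ $p=\infty$), and only up to the normalization $|S^d_1|$ between $U(S^d_1)$ and the unnormalized surface integral; the paper's proof is silent on this restriction, so your remark identifies a real gap in the statement as written rather than a flaw in your argument.
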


Problem \eqref{Prop:SecondOrder2WithAdjoints} also has an associated Pontryagin maximum principle as the other control problems discussed thus far, only that we do not write it out explicitly for brevity. We remark, however, that the corresponding Pontryagin principle can be used to motivate algorithms just as with the other problems discussed throughout this section. Notice that the number of state variables in problem \eqref{eqn:ControlExpandedSecondOrder2WithAdjoints} is $2m+3$. This control problem is more advantageous than one that directly tracks the Hessian matrix $D^2j(x,\theta)$ whenever $2m+3$ is considerably smaller than $d$.

\red
\begin{remark}  
  The variance of $V^{\delta,m} -  V^\delta$ in proposition \ref{Prop:SecondOrder2WithAdjoints} depends on the dimension $d$. 
  Consider, for example, the case $\tilde q=1$, which corresponds to the case $p=\infty$ in the discussion in section \ref{sec:SecondOrderRegResults}. First, notice that the terms $\rho^{z_i}\cdot z_i$ can be rewritten as $-z_i^\top D^2_x j(\theta, x) z_i$.
  Now, given that the matrix $D_x^2 j(\theta, x)$ is real and symmetric, and given that the $z_i$ are uniformly distributed on $S_1^d$, the spectral decomposition theorem can be used to find a linear growth of this variance with respect to the rank of the matrix, and \emph{a fortiori}, at most linear with respect to $d$. The linear in $d$ dependence of the variance estimate for fixed $x$ and $\theta$ can be upgraded to uniform ones over all $x$ and $\theta$ under smoothness assumptions on $j$. In terms of the sample size, this means that $m$ would need to grow a bit faster than linearly to guarantee convergence from the central limit theorem. Compare with the complexity of using and keeping track of the whole Hessian (as in \eqref{eqn:SecondOrderVersion2Modified}), which is quadratic with respect to the dimension.
  
  \red 
  Problem \eqref{eqn:SecondOrderVersion2Modified} can be easily extended to other values of $b \geq 1$, with $b \rightarrow \infty$ recovering the original problem \eqref{eqn:SecondOrderVersion2}. Notice, however, that the randomization strategy for the approximation of the integral degenerates for larger values of $b$. While problem \eqref{eqn:SecondOrderVersion2} is more closely connected to the original adversarial training, problem \eqref{eqn:SecondOrderVersion2Modified} is superior from a computational perspective.

\end{remark}
\nc


\subsection{Other forms of regularization in the literature}

In this section we review some works in the literature connected to robust training of neural networks and discuss four different ways of introducing regularization penalties. 

First, it is always possible to \textit{explicitly penalize the parameters} of a neural network. For example, in \cite{Ruthoto,EMaximPrinciple} the loss function $j(x,\theta)$ in \eqref{eqn:ControlBasic} is replaced with a loss function of the form:
\[ \tilde{j}(\theta,x )= \ell ( X_{x,T}, \theta_T) + \int_0^T \tilde{\Phi}(X_{x,t},\theta_t, \dot{\theta}_t) dt,  \]
where the running cost $\Phi$ explicitly penalizes the derivative of $\theta$ in time: this is one way to penalize parameters that is quite reasonable in the deep ResNet setting. A simple choice for $\tilde{\Phi}$ that allows us to draw a direct connection with the regularization problem \eqref{eqn:regularization} is 
\[ \tilde{\Phi}(\xi,\vartheta, u ):= \Phi(\xi,\vartheta)+ \lambda|u|^2,  \]
for $\lambda>0$. From a variational perspective, a regularization problem like the one described above admits optimal controls (parameters) in the classical sense. This contrasts with our problem \eqref{eqn:Regularization} where in general optimal solutions have to be sought in the space of \textit{generalized controls} a.k.a. \textit{Young measures}; see \cite{Pedregal}.

There are papers that \textit{explicitly penalize the input-to-output mappings} of a neural network. These are works that consider problems of the form:
\[ \inf_{\theta \in \Theta} \{ J(\mu_0,\theta) + R( g(X_{\cdot,T})) \}, \]
where $R(\cdot)$ is some type of regularization functional (typically a seminorm like for example the Lipschitz seminorm) acting explicitly on the input-to-output map $x\mapsto g(X_{x,T})$; here, the function $g$ is a simple function connected to the learning problem in hand (e.g. classification or regression) and that in general may depend on trainable parameters. The penalization of parameters via the regularity of their induced input-to-output maps has been considered in papers such as \cite{CalderOberman,NIPS2017_e077e1a5,weng2018evaluating} where it is has also been shown that the resulting function objectives do enforce adversarial robustness (i.e. stability to data perturbations). The paper \cite{ThorpeWang} considers this problem in the setting of graph-based learning.

Another approach to enforce robustness found in the literature is based on \textit{perturbation-based regularization terms}. These are approaches based on the construction of adversarial examples around the observed data that can be used to define a new ``perturbed" risk functional that is treated as regularizer. For example, \cite{Goodfellow1} considers the problem
\begin{equation}
\label{eqn:GoodfellowObjective}
   \inf_{\theta \in \Theta} \{ \E_{x \sim \mu_0}[ \tilde{j}(x,\theta) ] \}, \quad \text{ where }  \tilde{j}(x,\theta) :=  \alpha j(x,\theta) + (1-\alpha)j( x+ \delta \cdot \sign (\nabla_x j(x,\theta)),\theta), 
\end{equation}
for some $\delta>0$ and some $\alpha\in [0,1)$. In the above, $x+ \delta \cdot \sign (\nabla_x j(x,\theta))$ can be considered as an adversarial example. This specific way of constructing adversarial examples is known in the literature as the Fast Gradient Sign Method (FGSM) from \cite{Goodfellow1}. Although this method has some practical shortcomings, it has been found in \cite{wong_fast_2020} that it provides quickly state-of-the-art level robustification results when coupled with a random perturbation of the data $\mu_0$ within the $\ell^\infty$ ball considered.

Finally, objectives like the ones we presented in section \ref{sec:Regularization} have regularization terms that act explicitly on the derivatives of the \textit{loss function} (derivatives with respect to the input data) and thus penalize the parameters implicitly though the regularity that they induce on the loss function. In the literature, the work \cite{DoubleBackProp} proposed the use of a regularization term of the form
\[ \E_{x \sim \mu_0}[ \lVert \nabla_x j(x,\theta) \rVert ] \]
(i.e. as in problem \eqref{Robust control infty}), where it was also noticed that the resulting regularization problem could be implemented easily with a ``double backpropagation approach" (see our Remark \ref{rem:DoubleBackProp} below). There is a plethora of works that in the past few years have used and analyzed a similar \textit{input gradient regularization} approach (the case $p=\infty$ in our results or similar); see for example \cite{ObermanFinlay,GradientRegularization,SlavinRoss,StructuredGradReg,pmlr-v139-yeats21a}. More recently, higher order regularization terms penalizing \textit{curvature} of the loss function have been proposed, e.g. \cite{CurvatureRegularization}.

\subsection{Outline}

The rest of the paper is organized as follows. In section \ref{Sec:Regularized Robust Optimal} we present the proof of Theorem \ref{thm:Regularization} using two approaches, including a formal one based on the geometric structure of optimal transport. In section \ref{sec:PerturbationRobustAlgorithm} we discuss connections between regularization problems and perturbation-based training algorithms. In section \ref{Sec: Locally robust problem} we discuss the Pontryagin principle associated to the optimal control formulation of the first order regularization problem \eqref{eqn:Regularization} (i.e. Theorem \ref{Thm: Pontryagin}). Section \ref{sec:SecondOrderReg} is devoted to second order regularization problems and in particular the motivation of problems \eqref{eqn:SecondOrderVersion1} and \eqref{eqn:SecondOrderVersion2}, as well as their optimal control reformulations. Using the Pontryagin principles discussed throughout the paper we motivate a family of algorithms for the training of robust neural networks in section \ref{sec:algorithm}, \red and in section \ref{sec:Numerics} we present a series of numerical experiments to illustrate the performance of the algorithms. \nc We wrap up the paper in section \ref{sec:Conclusions} where we present some conclusions.














\section{First order regularization}
\label{Sec:Regularized Robust Optimal}

\subsection{Proof of Theorem \ref{thm:Regularization}}

In this section we present the proof of Theorem \ref{thm:Regularization}, first rigorously, and then by providing a formal argument that relies on the geometric structure of the space of probability measures $\mathcal{P}_p(\R^d)$ endowed with the $W_p$ distance. For our rigurous proof we use the fact that for every $\theta \in \Theta$ we have:
\begin{equation}
     \max_{\tilde \mu : \: W_p(\mu_0, \tilde \mu) \leq \delta} J(\tilde \mu, \theta)  = \max_{\pi \in \mathcal{F}_{\mu_0, \delta} }  \left\{ \int_{\R^d \times \R^d} j(\tilde x , \theta) d \pi(x,\tilde x)  \right\},
     \label{eqn:OptimiIntermsCouplings}
\end{equation}
where $\mathcal{F}_{\mu_0, \delta}$ is the set of all Borel probability measures $\pi$ on $\R^d \times \R^d$ whose first marginal is $\mu_0$ and satisfy
\[\int_{\R^d \times \R^d} \lVert x-\tilde x \rVert^p d \pi(x,\tilde x) \leq \delta^{\red p \nc}; \]
see Remark \ref{rem:CouplingsRem}. Identity \eqref{eqn:OptimiIntermsCouplings} has been used repeatedly in the literature of distributionally robust optimization to obtain dual representations for generic adversarial problems of the form \eqref{Robust problem} and to in turn propose new frameworks for statistical inference (e.g. \cite{RobustSolutionsBenTal,blanchet2019robust,Blanchet2,OPT-026,Kuhn,Wiesemann2014DistributionallyRC}; see section \ref{sec:PerturbationRobustAlgorithm} below for an explicit formula for the dual problem). In this section, however, we do not focus on the dual representation associated to the adversarial problem, but rather, on the flat geometry of the set of $\pi$s that parameterize the right hand side of \eqref{eqn:OptimiIntermsCouplings}.
\nc

\begin{proof}[Proof of Theorem \ref{thm:Regularization}]
We follow an approach based on lifting the probability distributions $\pi$ to a space of random variables as used, for example, when defining and studying the L-derivative (see for example sections 5.1-5.2 in \cite{carmona2018probabilistic}). The main idea is to use the fact that, over an atomless probability space $(\Omega,\F,\P)$, and for any coupling $\pi \in \Gamma(\mu_0, \tilde \mu)$ where $\mu_0, \tilde \mu$ are probability distributions on $\R^d$, we can construct $\R^d$-valued random variables $X,\tilde X$ on $\Omega$ with joint distribution $\pi$ (for a proof see Proposition 9.1.2 and Theorem 1.13.1 in \cite{dudley2018real}). We can then operate using these random variables to approximate our optimization problem. Since the development only makes use of our assumptions and the properties of $\pi$, the result is independent of the chosen probability space.

With this idea in mind, using Assumptions \ref{A1}, we can write for any coupling $\pi \in \Gamma(\mu_0,\tilde \mu)$ that
\begin{align*} 
  J(\tilde \mu, \theta) & = \E[j(\tilde X,\theta)] \\
  & = \E \left[ j(X,\theta) + \nabla_x j(X,\theta)\cdot (\tilde X-X) \right .\\
  & \qquad + \left. \int_0^1  \{ \nabla_x j(X + \lambda (\tilde X-X),\theta) - \nabla_x j(X,\theta) \}\cdot (\tilde X-X) \dd \lambda \right],
\end{align*}
where $X,\tilde X$ are random variables with $(X,\tilde X) \sim \pi$. Using now the fact that $\nabla_x j( \cdot, \theta)$ is Lipschitz, it follows that
\begin{align*} 
  \left| J(\tilde \mu, \theta) -  \E \left[ j(X,\theta) + \nabla_x j(X,\theta)\cdot (\tilde X-X) \right] \right|
   \leq \frac{1}{2} Lip( \nabla_x j(\cdot, \theta) ) \E [\lVert\tilde X-X\rVert_e^2],
\end{align*}
where $\lVert \cdot \rVert_e$ is the Euclidean norm. 

Note that the constraint $\int_{\R^d \times \R^d} \lVert x-\tilde x \rVert^p d \pi(x,\tilde x) \leq \delta$ means that we only consider couplings where 
\[ \E [\rVert\tilde X-X \rVert_e^2] \leq C \E[ \|\tilde X-X\|^2] \leq C (\E[ \|\tilde X-X\|^p])^{\red 2/p \nc} \leq C\delta^2   \]
where $C$ is a constant appearing due to the equivalence of norms in $\R^d$; in the last line we use the fact that $p\geq 2$ to apply Jensen's inequality. Since the above is a uniform control in the space of feasible solutions $\pi$ and $\Theta$ (thanks to Assumption \ref{assump:Smoothness}), we conclude that
\[ \sup_{\tilde \mu: \: W_p(\mu_0,\tilde \mu)\leq \delta } J(\tilde \mu,\theta) = \sup_{ \pi \: : \int_{\R^d \times \R^d} \lVert x-\tilde x \rVert^p d \pi(x,\tilde x) \leq \delta^{\red p \nc}}  \{J(\mu_0,\theta) + \E[\nabla_x j(X,\theta)\cdot (\tilde X-X) ] \} +  O(\delta^2), \]
for $O(\delta^2)$ independent of $\theta$. Clearly, the first term within the supremum on the right-hand side is independent of $\tilde \mu$. We then only need to optimize the second term. Now, by a (generalized) H\"older inequality (see for example \cite{yang1991generalized}), it follows that
\[ \E[\nabla_x j(X,\theta)\cdot (\tilde X-X) ] \leq \left(\E[\|\nabla_x j(X,\theta)\|_*^q]\right)^{1/q} \left(\E[\|\tilde X-X) \|^p]\right)^{1/p} \]
where equality can be attained whenever $1\leq p\leq \infty$ for a proper choice of $\tilde X-X$ with $\left(\E[\|\tilde X-X \|^p]\right)^{1/p} = \delta$. Therefore,
\[ \sup_{\tilde \mu: \: W_p(\mu_0,\tilde \mu)\leq \delta } J(\tilde \mu,\theta) = J(\mu_0, \theta) + \delta \left(\E[\|\nabla_x j(X,\theta)\|_*^q]\right)^{1/q}, \]
which in turn deduces our claim since the $O(\delta^2)$ term is uniform over all $\theta \in \Theta$.
\end{proof}
\nc
\begin{remark} Coming back to the literature on L-derivative mentioned in the proof of Theorem \ref{thm:Regularization}, we remark that in this case $\partial_\mu J(\mu_0,\theta) = \nabla_x j(X,\theta)$ (see section 5.2.2, Example 1 in \cite{carmona2018probabilistic} ). As pointed out before, this quantity does not depend on the choice of lifting.  
\end{remark}

\red

\grn



\nc

\subsubsection{A formal geometric analysis in the space $\mathcal{P}_p(\R^d)$}
\label{sec:TaylorGeometry}

In this section we present an alternative formal geometric analysis for Theorem \ref{thm:Regularization}. We restrict to the case where $p \geq 2$ and for simplicity assume that $\lVert \cdot\rVert$ is the Euclidean norm. The idea in this geometric approach is to use the formal differential structure of the space of probability measures $\mathcal{P}_p(\R^d)$ endowed with the $W_p$ distance and carry out a Taylor expansion of the function $\tilde \mu \mapsto J(\tilde \mu, \theta)$ around $\mu_0$. Ultimately, the goal is to replace the function $J(\tilde \mu, \theta)$ with an approximation written in terms of a natural retraction of $\tilde \mu$ to the tangent space of $\mathcal{P}_p(\R^d)$ at the point $\mu_0$. For a discussion on the general differential geometric perspective in the Wasserstein space (i.e. when $p=2$) see Chapter 8.2 in \cite{VillaniBook}, and for more details on the geometry of the space $(\mathcal{P}_p(\R^d), W_p)$ see Chapter 8 in \cite{AmbrosioGigliSavare}.

In more precise terms, let $\tilde \mu$ belong to the  $W_p$-ball of radius $\delta$ around $\mu_0$. For every such $\tilde \mu$ we consider the constant speed geodesic $t \in [0,1] \mapsto (\mu_t, V_t)$ that connects $\mu_0$ with $\tilde \mu$. Namely, the continuity equation
\begin{equation*}
\partial_t \mu_t + \divergence(V_t \mu_t) =0,\quad t\in (0,1).
\end{equation*}
and the relations
\[ \int_{0}^t \int_{\R^d} \lVert V_s(x) \rVert^p d\mu_s(x) ds =   t (W_p(\mu_0, \tilde \mu))^p, \quad t \in [0,1], \]
are satisfied. Here, $V_t$ is a vector field in $\R^d$ and $\mu_t $ is a probability measure in $\R^d$ which at time $t=1$ coincides with $\tilde \mu$. The pair $(\mu_t, V_t)$ can be characterized further. Indeed, assuming that there exists an \textit{optimal} transport map $T^*$ between $\mu_0$ and $\tilde \mu$ for the $c_p$ cost (for example assuming that $\mu_0$ has a density with respect to the Lebesgeue measure) we can write:
\begin{equation}
\label{eqn:Geodesic}
  \begin{cases}  \mu_t=  T_{t\sharp } \mu_0, \quad \forall t \in [0,1], \\
V_t( T_t(x) ) = T^*(x)-x, \quad \forall x \in \text{supp}(\mu_0), \forall t \in [0,1],
\end{cases}
\end{equation}
where the map $T_t$ is given by 
\[ T_t(x):= t T^*(x) +(1-t) x.\]
The function $\tilde \mu \mapsto V_0 \in L^p(\R^d : \R^d, \mu_0)$ can be understood as a \textit{logarithmic map}, i.e. a map that in particular sends points in the manifold $\mathcal{P}_p(\R^d)$ to tangent vectors at $\mu_0$ and that satisfies:
\[ \lVert V_0 \rVert^p_{L^p(\R^d: \R^d, \mu_0)} = \int_{\R^d}  \lVert V_0(x) \rVert^p d\mu_0(x) = (W_p(\mu_0, \tilde \mu) )^p. \]

Let us now find an approximation for the function $J(\tilde \mu, \theta)$ in terms of an expression involving $V_0$. For that purpose we Taylor-expand the function $j(\cdot ,\theta)$ along the geodesic \eqref{eqn:Geodesic}. First, following equation 8.1.4. in \cite{AmbrosioGigliSavare} we obtain:
\[ \frac{d}{dt} J(\theta, \mu_t) = \frac{d}{dt} \int_{\R^d} j(x,\theta) d\mu_t(x) = \int_{\R^d} \nabla_x j(x,\theta)  \cdot V_t(x) d\mu_t(x); \]
notice that, geometrically speaking, when $p=2$ the above equation is the standard relation between directional derivatives and gradients in the Wasserstein space. We can also compute the second derivative along the geodesic as follows:
\begin{align}
\label{eqn:SecondDerivative}
\begin{split}
\frac{d^2}{dt^2} J(\theta, \mu_t) & =  \frac{d}{dt} \left( \int_{\R^d} \nabla_x j(x,\theta) \cdot V_t(x) d\mu_t(x) \right)  
\\& =  \frac{d}{dt} \left( \int_{\R^d} \nabla_x j(\theta, T_t(x) ) \cdot V_t(T_t(x)) d\mu_0(x) \right)
\\& = \int_{\R^d}  (D^2_xj(\theta, T_t(x) ) V_0(x) )\cdot V_0(x) d \mu_0(x).
\end{split}
\end{align}
In particular, 
\[ \left| \frac{d^2}{dt^2} J(\theta, \mu_t)  \right|  \leq C \int_{\R^d} \lVert V_0(x) \rVert^2 d \mu_0(x) \leq C \left( \int_{\R^d} \lVert V_0(x) \rVert^p d \mu_0(x)\right)^{2/p} \leq C \delta^2, \]
where the constant $C$ is uniform over all $t \in [0,1]$, all $\theta \in \Theta$ (thanks to Assumption \ref{assump:Smoothness}) and all $\tilde \mu $ with $W_p(\mu_0, \tilde \mu) \leq \delta$; notice that in the second to last line we have used Jensen's inequality since $p\geq 2$.

From the above computations we obtain:
\begin{align*}
\begin{split}
J(\tilde \mu, \theta) &= J(\mu_0, \theta) +  \frac{d}{dt} J(\theta, \mu_t) \big|_{t=0}  + O(\delta^2)
\\& =  J(\mu_0, \theta) +  \int_{\R^d} \nabla_x j(x,\theta)  \cdot V_0(x) d\mu_0(x)   + O(\delta^2),
\end{split}
\end{align*}
where again we notice that the $O(\delta^2)$ term is uniform over all $\theta \in \Theta$ and all $\tilde{\mu}$ within $W_p$-distance $\delta$ from $\mu_0$. Using the relation between the $\tilde \mu$s and their corresponding $V_0$s we can thus expect that up to an error of order $O(\delta^2)$ (independent of $\theta \in \Theta$), the expression:
\[ \max_{\tilde \mu: \: W_p(\mu_0,\tilde \mu)\leq \delta} J(\theta,\tilde \mu ) \]
(an optimization problem over a curved manifold) is equal to:
\begin{equation}
\label{eqn:FirstOrderFlat}
    J(\mu_0, \theta) +  \max_{ V_0 : \:  \lVert V_0\rVert_{L^p(\R^d:\R^d, \mu_0 )} \leq \delta }   \left\{ \int_{\R^d} \nabla_x j(x,\theta) \cdot V_0(x) d\mu_0(x)   \right\},
\end{equation}
which is an optimization over a flat Banach space. Since \eqref{eqn:FirstOrderFlat} is simply a dual representation for the $L^q(\R^d: \R^d, \mu_0) $-norm of $\nabla_x j(\cdot ,\theta)$, it follows that \eqref{eqn:FirstOrderFlat} is equal to:
\[  J(\mu_0, \theta) + \delta \left(  \int_{\R^d} \lVert  \nabla_x j(x, \theta)\rVert^q d\mu_0(x) \right)^{1/q},    \]
which is the objective function in \eqref{eqn:Regularization}. Notice that the $V_0$ achieving the maximum takes the form:
\begin{equation}
\label{eqn:V0Max}
V_0(x):= \delta \left[ \int_{\R^d} \lVert  \nabla_x j(\tilde x, \theta)\rVert^q d\mu_0(\tilde x)  \right]^{-1/p}  \frac{\nabla_x  j(x,\theta)}{ \| \nabla_x  j(x,\theta) \|^{1-1/(p-1)}  },
\end{equation}
with the convention $\mathbf{0}/0=\mathbf{0}$. 

\begin{remark}
\label{rem:OTapproachFirstOrder}
There are a few steps in the above analysis that would need further justification in order to make this analysis into a rigurous proof. Here we offer some comments on this direction.

\begin{enumerate}
\item First, we have used the existence of optimal transport maps to write the geodesic \eqref{eqn:Geodesic} and to define an associated retraction map $\tilde \mu \mapsto V_0$. This can be done, for example, if we assume that $\mu_0$ is absolutely continuous with respect to the Lebesgue measure. The reduction to the absolutely continuous case can be accomplished by an approximation argument since the $O(\delta^2)$ correction terms in the above analysis only depend on the control we have on the Hessian (in $x$) of the loss function given by the Assumption \ref{assump:Smoothness}.

\item Going from the maximization problem $\max_{\tilde \mu: \: W_p(\mu_0,\tilde \mu)\leq \delta} \{ J(\theta,\tilde \mu ) \}$ to \eqref{eqn:FirstOrderFlat} is motivated by the fact that on a finite dimensional smooth manifold one can find a one to one correspondence between points in a geodesic ball with small enough radius $R$ (in particular smaller than the injectivity radius of the manifold) and tangent vectors at the center of the ball that have norm less than $R$. In the space $\mathcal{P}_p(\R^d)$, however, this intuition breaks down. To illustrate how one can still recover \eqref{eqn:FirstOrderFlat} using optimal transport theory let us consider the case $p=2$ for concreteness and assume that $\mu_0$ is absolutely continuous with respect to the Lebesgue measure. In that case, the $V_0$s induced by $\tilde \mu$s within $W_2$-distance $\delta$ from $\mu_0$ can be written as:
\[ \delta( \nabla_x \varphi(x) - \frac{x}{\delta} ) = \delta \nabla_x ( \varphi(x) - \frac{\lVert x\rVert ^2}{2\delta})   \]
for $\varphi $ a convex function, as it follows from Brenier's theorem (see Theorem 2.12 in \cite{VillaniBook}). Now, by Assumption \ref{assump:Smoothness}, the function 
\[ x \mapsto \frac{j(x,\theta)}{c} + \frac{\lVert x \rVert^2}{2 \delta}\] 
is a convex function for all small enough $\delta$. In the above, $c = \lVert \nabla_x j(\cdot ,\theta) \rVert_{L^2(\R^d: \R^d, \mu_0)}$. We can thus take $\varphi(x) =  \frac{j(x,\theta)}{c} + \frac{\lVert x \rVert^2}{2 \delta}$ (assuming $\delta$ is small enough) and $V_0(x) = \delta( \nabla \varphi(x) - x)$. It is clear that this $V_0$ maximizes \eqref{eqn:FirstOrderFlat} when $p=2$.

\item It is worth mentioning that the formal analysis presented here does not use specific attributes of the Euclidean norm, and in fact can be used for general norms (as done in our rigurous proof of Theorem \ref{thm:Regularization}) with the difference that the form of the maximizer in problem \eqref{eqn:FirstOrderFlat} would be in general more cumbersome. One notable exception is the case of the $\ell^\infty$ norm on $\R^d$-vectors and the induced $L^\infty$ norm on vector fields (i.e. $p=\infty$). Indeed, in that case the maximizer takes the form:
\[ V_0(x)= \delta \text{sign}(\nabla_x j(x,\theta  )),  \]
where the $\text{sign}$ function acts coordinatewise on vectors.

\item Finally, it is worth mentioning that one of the main motivations for presenting this alternate analysis is to introduce additional tools that may come in useful when considering different adversarial learning problems where for example the energy to maximize is not simply an integral with respect to a measure (here $J(\tilde \mu, \theta)$) but rather an energy which in general may include entropic or interaction terms as done recently in \cite{DongDP0020}. Most modern algorithms used for training robust neural networks include a step where data points are randomly perturbed before considering any drift information induced by the loss function. We believe that making a more concrete connection between said algorithms and a distributionally robust optimization problem would require analyzing this type of entropic term. This is work that is left for the future.

\end{enumerate}

\end{remark}

\nc

\nc

\subsection{Connection with perturbation-based training algorithms}
\label{sec:PerturbationRobustAlgorithm}

    The current literature on robust training is dominated by algorithms based on constructing explicit adversarial samples around a given distribution that are then used as \emph{the} training samples for the network. See for example \cite{carlini_evaluating_2019,carlini_evaluating_2017,madry_deep_2019,tramer_ensemble_2020}, and references therein.

    We can relate the results of our study in terms of adversarial samples. Indeed, equation \eqref{eqn:Geodesic} suggests considering a transport map of the form:
    \[ \hat{T}(x) := V_0(x) + x, \]
    for the $V_0$ maximizing \eqref{eqn:FirstOrderFlat}, and in turn consider the associated measure $\hat{\tilde{\mu}}:= \hat{T}_{\sharp} \mu_0$ (the pushforward of $\mu_0$ by $\hat{T}$); notice that $\hat{\tilde \mu}$ depends on $\theta$, and for each $\theta$ $\hat{\tilde \mu}$ is a natural surrogate for the maximizer of problem \eqref{thm:Regularization}. One may then consider an objective function of the form:
    \[ \theta \in \Theta \mapsto J(\hat{\tilde \mu}, \theta) \]
    or more generally
    \[\theta \in \Theta \mapsto \alpha J(\mu_0, \theta) + (1-\alpha) J(\hat{\tilde \mu}, \theta),  \]
    for some $\alpha \in [0,1)$, and the corresponding minimization problem over $\theta \in \Theta$ in order to enforce robustness. It is straightforward to show that, under the assumptions of Theorem \ref{thm:Regularization}, the minimum value of the resulting problem when $\alpha= 1-\delta$ is within an error of order $\delta^2$ of the minimum value of the regularization problem \eqref{eqn:Regularization}. We remark that if $V_0$ is taken to be $V_0(x):= \delta \cdot \text{sign}(\nabla_x j(x,\theta  ))$, i.e. as in iii) in Remark \ref{rem:OTapproachFirstOrder}, then one can recover problem \eqref{eqn:GoodfellowObjective} introduced in \cite{Goodfellow1}.



To conclude our discussion in this section, let us point out that maximizers of problem \eqref{eqn:RobustObjective} have a clear mathematical characterization. Unfortunately, even with this characterization an optimal $\tilde \mu $ is in general difficult to compute explicitly.

\begin{proposition}
Let $\delta>0$ and let $\theta \in \Theta$. Then
\begin{equation}
    \max_{\pi \in \mathcal{F}_{\mu_0, \delta} }  \left\{ \int_{\R^d \times \R^d} j(\tilde x , \theta) d \pi(x,\tilde x)  \right\}  = \min_{\gamma \geq 0} \ \left\{  \gamma \delta^{\red p \nc} +  \E_{x\sim \mu_0} \left[ j^\gamma(x, \theta) \right] \right\},
    \label{eqn:DualAL}
\end{equation}
where 
\[ j^{\gamma}(x, \theta) := \sup_{\tilde x } \left\{ j(\tilde x , \theta ) -  \gamma \lVert x- \tilde x \rVert^{\red p \nc} \right\}. \]

Moreover, if $\pi^*$ is a solution to the problem on the left hand side, then there exists a solution to the problem on the right hand side $\gamma^*$ such that for $\pi^*$-a.e. $(x, x^*)$:
\[ x^* \in \argmax_{\tilde x \in \R^d} \left\{ j(\tilde x , \theta)  - \gamma^* \lVert x- \tilde x\rVert^{\red p \nc} \right\}.  \]
Finally, the second marginal of $\pi^*$ is a solution to the problem \eqref{eqn:RobustObjective}.

\end{proposition}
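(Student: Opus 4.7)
The plan is to establish the identity as the strong duality relation for the Lagrangian relaxation of the transport-cost constraint $\int\|x-\tilde x\|^r d\pi \leq \delta^r$. The easy direction ($\leq$) comes from weak duality: for any $\pi \in \mathcal{F}_{\mu_0,\delta}$ and any $\gamma \geq 0$, nonnegativity of the slack yields
\[
\int j(\tilde x,\theta) d\pi \leq \gamma\delta^r + \int\bigl[j(\tilde x,\theta) - \gamma\|x-\tilde x\|^r\bigr] d\pi \leq \gamma\delta^r + \int j^\gamma(x,\theta) d\mu_0(x),
\]
the last step by disintegrating $\pi = \mu_0 \otimes \pi_x$ with respect to its first marginal and taking the pointwise supremum inside each conditional. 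Taking infimum in $\gamma$ and supremum in $\pi$ gives the $\leq$ inequality.

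For the reverse direction, I would first argue that a dual minimizer $\gamma^*$ exists. Under Assumption \ref{A1}, $j(\cdot,\theta)$ has at most linear growth (by boundedness of $f$ and Lipschitzness of the first derivatives of $\ell$), so for every $\gamma>0$ the supremum defining $j^\gamma(x,\theta)$ is attained on a bounded set depending continuously on $(x,\gamma)$; the objective $\gamma\mapsto \gamma\delta^r + \int j^\gamma(x,\theta) d\mu_0(x)$ is convex and coercive in $\gamma$, so a minimizer $\gamma^*$ exists. Next, I would invoke the Kuratowski--Ryll-Nardzewski measurable selection theorem (applicable since the correspondence $x \mapsto \argmax_{\tilde x}\{j(\tilde x,\theta) - \gamma^*\|x-\tilde x\|^r\}$ is nonempty-compact-valued and measurable) to pick a measurable $x \mapsto x^*(x)$ in this argmax set $\mu_0$-a.e., and define $\pi^* := (\mathrm{Id}, x^*)_\sharp \mu_0$. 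By construction $\pi^*$ has first marginal $\mu_0$ and realizes the inner supremum: $\int[j(\tilde x,\theta) - \gamma^*\|x-\tilde x\|^r] d\pi^* = \int j^{\gamma^*}(x,\theta) d\mu_0(x)$.

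The crucial remaining step is feasibility of $\pi^*$ together with complementary slackness $\gamma^*\bigl(\delta^r - \int\|x-\tilde x\|^r d\pi^*\bigr)=0$. These follow from first-order optimality at $\gamma^*$: the (right) directional derivative of the dual objective at $\gamma^*$ equals $\delta^r - \int\|x-\tilde x\|^r d\pi^*$ and must be $\geq 0$, giving feasibility; if $\gamma^*>0$, the left derivative gives the reverse inequality, yielding slackness. Chaining these with the $\leq$ direction shows $\pi^*$ attains the primal supremum and closes the duality gap. The pointwise $\argmax$ characterization of arbitrary primal optimizers then follows by applying this same slackness relation to an arbitrary optimal $\pi^*$ and $\gamma^*$, since the chain of inequalities used to prove weak duality must then hold with equality $\pi^*$-a.e. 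Finally, for the marginal claim, if $\tilde\mu^* := (\mathrm{pr}_2)_\sharp \pi^*$ then $W_r(\mu_0,\tilde\mu^*)^r \leq \int\|x-\tilde x\|^r d\pi^* \leq \delta^r$ and $J(\tilde\mu^*,\theta) = \int j(\tilde x,\theta) d\pi^*$, so by \eqref{eqn:OptimiIntermsCouplings} $\tilde\mu^*$ solves \eqref{eqn:RobustObjective}.

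The main obstacle will be executing the measurable selection and the complementary slackness rigorously on $\R^d$: one must carefully exclude the possibility that the inner supremum in $j^\gamma$ fails to be attained or that the argmax correspondence is ill-behaved on a $\mu_0$-null set. A cleaner alternative is to cite the Blanchet--Murthy strong duality theorem for Wasserstein DRO; the upper semicontinuity and growth hypotheses required there are granted here by Assumption \ref{A1} and the bounded support of $\mu_0$.
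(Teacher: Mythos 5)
Your proposal is correct in substance but far more self-contained than what the paper actually does: the paper's entire proof is a citation of the standard Wasserstein-DRO strong duality results (Blanchet--Murthy and related references) plus the remark that the $\argmax$ characterization of optimizers is ``a straightforward consequence of the zero duality gap.'' Your closing sentence -- that one could instead just invoke Blanchet--Murthy -- is therefore exactly the paper's route, and your weak-duality computation via disintegration, the construction of a primal optimizer from a measurable selection of the $\gamma^*$-argmax, and the derivation of the $\argmax$ condition by forcing equality through the weak-duality chain are a faithful reconstruction of how those cited results are proved. What your sketch buys is transparency about where the hypotheses enter; what the paper's citation buys is avoiding the two points where your sketch is not yet airtight. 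First, the claim that $j(\cdot,\theta)$ has at most linear growth is wrong under Assumption \ref{A1}: Lipschitz first derivatives of $\ell$ only give quadratic growth (cf.\ the squared-loss example), so $j^\gamma(x,\theta)$ may be $+\infty$ for small $\gamma$ when $r=2$; this does not break existence of a dual minimizer (the objective is still proper, convex and coercive), but the justification needs adjusting. Second, and more seriously, when the argmax correspondence is multivalued the subdifferential of the dual objective at $\gamma^*>0$ is an interval $[\delta^r-M,\delta^r-m]$ with $m$ and $M$ the infimum and supremum of $\int\lVert x-x^*(x)\rVert^r\,d\mu_0$ over measurable selections; first-order optimality only yields $m\le\delta^r\le M$, so a single selection need not satisfy $\int\lVert x-\tilde x\rVert^r\,d\pi^*=\delta^r$, and one must in general mix two selections (i.e.\ use a genuinely non-deterministic coupling) to achieve complementary slackness exactly. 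You flag this obstacle honestly, and the clean escape is precisely the citation the paper uses.
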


\begin{proof}
 The dual characterization \eqref{eqn:DualAL} is discussed, for example, in any of the following references: \cite{RobustSolutionsBenTal,blanchet2019robust,Blanchet2,OPT-026,Kuhn,Wiesemann2014DistributionallyRC}. The characterization for the minimizers is a straightforward consequence of the zero duality gap.
\end{proof}
\nc

\subsection{Pontryagin principle for the first-order regularized robust control problem}
\label{Sec: Locally robust problem}
  We start by writing the modified control problem in a slightly different form. Under Assumption \ref{A1} and fixed but arbitrary control $\theta$  it follows that 
  \[ \nabla_x j(x,\theta) = -P_{x,0}. \]
  Indeed, let $\Delta x$ be an arbitrary unitary vector in $\R^d$.  From the flow property of the ordinary differential equation for $X$ we get that for almost all $t\in [0,T]$
  \[ \lim_{\epsilon \downarrow 0 } \frac{1}{\epsilon} \{X_{x+\epsilon \Delta x, t} - X_{x,t}\}  = \zeta^{\Delta x}_t    \]
  where $\zeta^{\Delta x}$ satisfies
  \begin{equation}
    \begin{cases}
      \dot{\zeta}^{\Delta x}_t = D_\xi f(X_{x,t},\theta_t) \zeta^{\Delta x}_t  \\
      \zeta^{\Delta x}_0 = \Delta x.
    \end{cases}  
  \label{Directional_derivative_X}    
  \end{equation}
  
  Thus, the chain rule implies that
  \begin{align*}
    \nabla_x j(x,\theta) \cdot \Delta x & 
    = \nabla_\xi \ell (X_{x,T},\theta_T) \cdot \zeta^{\Delta x}_T + \int_0^T  \nabla_\xi \Phi(X_{x,s}, \theta_s) \cdot \zeta^{\Delta x}_s \dd s\\
    & = - P_{x,T} \cdot \zeta^{\Delta x}_T + \int_0^T  \nabla_\xi \Phi(X_{x,s}, \theta_s) \cdot \zeta^{\Delta x}_s \dd s\\
    & = - P_{x,0} \cdot \zeta^{\Delta x}_0 - \int_0^T \dot{P}_{x,s} \cdot \zeta^{\Delta x}_s \dd s - \int_0^T P_{x,s} \cdot \dot{\zeta}^{\Delta x}_s \dd s +\int_0^T  \nabla_\xi \Phi(X_{x,s}, \theta_s) \cdot \zeta^{\Delta x}_s \dd s\\
    & = -P_{x,0} \cdot \zeta^{\Delta x}_0 = -P_{x,0} \cdot \Delta x.
  \end{align*} 
  This equivalence between the dual variable $P$ and (minus) the derivative of the loss function with respect to the input allows us to rewrite problem \eqref{eqn:Regularization} in the form \eqref{eqn:ControlExpanded}. We are ready to provide a proof of the Pontryagin principle result stated in Theorem \ref{Thm: Pontryagin}.

  \begin{proof}[Proof of Theorem \ref{Thm: Pontryagin}]
  
    The proof follows the well-known ``needle'' perturbation approach: we take the optimal control and change it in a small interval; then, we deduce the effect on the overall value function and deduce first order conditions of optimality from linear expansions and integration by parts.
  
    To simplify the problem, note that we can and will assume, without loss of generality, that there is no running cost (i.e., $\Phi\equiv 0$) by transforming the running cost in a state variable: indeed, set $\hat x:= (x,x')$ with $\hat x_0 = (x_0,0)$, $\hat \p := (\p,-1)$,  $\hat f:= (f, \Phi)$, and $\hat \ell(\bar x) := \ell(x) + x' $, and note that the reduced is an equivalent control problem, still satisfies Assumptions \ref{A2}, and does not contain any running cost. 

    Now, let $\tau \in (0,T)$ be a Lebesgue point for $(f(X^*_t,\theta^*_t), D_\xi f(X^*_t,\theta^*_t) )$. By Assumption \ref{A2}, the set of such points is dense in $[0,T]$. For $\epsilon \in (0, T-\tau) $ and $\eta \in \Theta$ let
    \[ \theta_t^{\epsilon, \tau} = 
    \begin{cases}
      \eta & \text{if } t\in [\tau-\epsilon,\tau] \\
      \theta^*_t & \text{otherwise}   
    \end{cases},  \]
    and let $X_t^{\epsilon,\tau}, P_t^{\epsilon,\tau}$ be the solutions of    
    \begin{align*}
      X_{x,t}^{\epsilon,\tau} & = x + \int_0^t f( X_{x,s}^{\epsilon,\tau}, \theta^{\epsilon,\tau}_s) \dd s \\
      P_{x,t}^{\epsilon,\tau} & = -\nabla_\xi \ell(X_{x,T}^{\epsilon,\tau},\theta_T^*) + \int_t^T \{ D_\xi f( X_{x,s}^{\epsilon,\tau}, \theta^{\epsilon,\tau}_s)^\top P_{x,s}^{\red \epsilon, \tau\nc} \} \dd s 
    \end{align*}
    that is, solutions of the forward variable using the control  $\theta^{\epsilon, \tau}$ instead of the optimal $\theta^*$.

    Let us study the $\epsilon$-order effect of this change of control policy. Let
    \[
    \begin{cases}
      \dot{u}_{x,t} = D_\xi f(X^*_{x,t},\theta^*_t) u_{x,t} & \text{for } t>\tau; \\
      u_{x,t} = f(X^{*}_{x,\tau}, \eta ) -  f(X^{*}_{x,\tau}, \theta^*_\tau )  & \text{for } t =\tau;\\      
      u_{x,t} = 0 & \text{for } t <\tau.
    \end{cases}      
    \]
    and
    \[
    \begin{cases}      
      \begin{split}
     \dot{v}_{x,t} = & - D_\xi f(X^*_{x,t},\theta^*_t)^\top v_{x,t}  - (D^2_\xi f(X^*_{x,t},\theta^*_t) u_{x,t})^\top P^*_{x,t} \\
        & + \delta_{\tau} \{ D_\xi f(X_{x,\tau}^*, \theta^*_\tau) - D_\xi f(X_{x,\tau}^*, \eta)   \}^\top P_{x,\tau}^* 
      \end{split}  \\
      v_{x,T} = - D^2_\xi \ell(X^*_{x,T}) u_{x,T};
   \end{cases}
   \]
   where the term $ \delta_{\tau} \{ D_\xi f(X_{x,\tau}^*, \theta^*_\tau ) - D_\xi f(X_{x,\tau}^*, \eta)   \}^\top P_\tau^*  $ denotes the jump arising at time $\tau$ due to the change in control. We can show (as in \cite{yong1999stochastic}) that
    \begin{align*}  
      \lim_{\epsilon \downarrow 0} \frac{1}{\epsilon} ( X^{\epsilon,\tau}_{x,t} - X^{*}_{x,t} )  = u_{x,t};\\
      \lim_{\epsilon \downarrow 0} \frac{1}{\epsilon} ( P^{\epsilon,\tau}_{x,t} - P^{*}_{x,t} ) = v_{x,t}.
    \end{align*}
    Since optimality implies
    \[  \E_{x \sim \mu_0}  \left[ \ell(X^{\epsilon,\tau}_{x,T})\right] +  \red \delta   \nc \left(\E_{x \sim \mu_0} \left[ \lVert P^{\epsilon,\tau}_{x,0}\rVert ^q  \right]\right)^{1/q}
    \geq \E_{x \sim \mu_0}  \left[ \ell(X^{*}_{x,T})\right] +  \red\delta  \nc \left(\E_{x \sim \mu_0} \left[ \lVert P^{*}_{x,0}\rVert^q  \right]\right)^{1/q}
       \]
    we have from Assumption \ref{A2}  and dominated convergence that
    \begin{align}  
      0 & \leq  \lim\limits_{\epsilon\downarrow 0} \frac{1}{\epsilon} \left\{  \E_{x \sim \mu_0}  \left[ \ell(X^{\epsilon,\tau}_{x,t}) - \ell(X^*_{x,t}) \right] + \red \delta\nc ( \left(\E_{x \sim \mu_0} \left[ \lVert P_0^{\epsilon,\tau}(x_0)\rVert^q  \right]\right)^{1/q} -  \left(\E_{x \sim \mu_0} \left[ \lVert P^*_{x,0}\rVert^q  \right]\right)^{1/q} ) \right\} \notag\\
      & =   \E_{x \sim \mu_0}  \left[ \nabla_\xi \ell(X^*_{x,T})^\top u_{x,T} \right] 
      + \red  \delta \nc \left(\E_{x \sim \mu_0} \left[ \lVert P^*_{x,0}\rVert^q  \right]\right)^{\red - \nc\frac 1 p} \E_{x \sim \mu_0} \left[ \lVert P^*_{x,0}\rVert^{q-2} (P^*_{x,0})^\top v_{x,0} \right] \notag \\
      & = \E_{x \sim \mu_0} \left[ \nabla_\xi \ell(X^*_{x,T})^\top u_{x,T}+ (\beta^*_{x,0})^\top v_{x,0}\right]  
      \label{pontryagin1}
    \end{align}
    On the other hand, $\dd( (\beta^*_{x,t})^\top v_{x,t} ) = \dd (\beta^*_{x,t})^\top v_{x,t} +  (\beta^*_{x,t})^\top \dd v_{x,t} $, which implies after some cancellations that
    \begin{align*}  
      (\beta^*_{x,0})^\top v_{x,0} & =  (\beta^*_{x,T})^\top v_{x,T} \red - \nc (\beta_{x,\tau}^*)^\top \{ D_\xi f(X_{x,\tau}^*, \theta^*_\tau) - D_\xi f(X_{x,\tau}^*, \eta)   \}^\top P_{x,\tau}^*  \\
      & \quad \red + \nc   \int_0^T (\beta_{x,t}^*)^\top (D_\xi^2 f(X^*_{x,t},\theta^*_t) u_{x,t})^\top P_{x,t}^* \dd t.
    \end{align*}
    Therefore, equation \eqref{pontryagin1} becomes
    \begin{align}
      0 & \leq  \E_{x \sim \mu_0} \left[ \{  \nabla_\xi \ell(X^*_{x,T})^\top- (\beta^*_{x,T})^\top D_\xi^2 \ell(X_{x,T}^*)\} u_{x,T} \red + \nc  \int_0^T (\beta_{x,t}^*)^\top (D_\xi^2 f(X^*_{x,t},\theta^*_t) u_{x,t})^\top P_{x,t}^* \dd t \right]  \notag \\
          & \qquad \red + \nc  \E_{x \sim \mu_0} \left[ \beta^{*\top}_{ x \nc, \tau} \{ D_\xi f(X_{x,\tau}^*, \eta) - D_\xi f(X_{x,\tau}^*, \theta^*_\tau)   \}^\top P_{x,\tau}^*\right] \notag\\
       & = \E_{x \sim \mu_0} \left[  - \nc \alpha_{x,T}^{*\top} u_{x,T} \red + \nc \int_0^T \beta_{x,t}^{*\top} (D_\xi^2 f(X^*_{x,t},\theta^*_t) u_{x,t})^\top P_{x,t}^* \dd t \red + \nc \beta^{*\top}_{x,\tau} \{ D_\xi f(X_{x,\tau}^*, \eta) - D_\xi f(X_{x,\tau}^*, \theta^*_\tau)   \}^\top P_{x,\tau}^* \right]   \notag \\
       & = \E_{x \sim \mu_0} \left[\red - \nc \alpha_{x,\tau}^{*\top} \{ f(X^{*}_{x,\tau}, \eta ) -  f(X^{*}_{x,\tau}, \theta^*_\tau) \} \red + \nc  \beta^{*\top}_{x,\tau} \{ D_\xi f(X_{x,\tau}^*, \eta) - D_\xi f(X_{x,\tau}^*, \theta^*_\tau)   \}^\top P_{x,\tau}^* \right]. \notag
    \end{align} 
    This deduces the maximum principle \eqref{Modified maximum principle} in the case without running costs. 


  \end{proof}

\section{Second order regularization}
\label{sec:SecondOrderReg}


We motivate now problems \eqref{eqn:SecondOrderVersion1} and \eqref{eqn:SecondOrderVersion2}. Continuing our computations from section \ref{sec:TaylorGeometry} we can write for every $\tilde \mu$ within $W_p$-distance $\delta$ from $\tilde \mu$:
\[  J(\tilde \mu, \theta) = J(\mu_0, \theta) + \int_{\R^d} \nabla_x j(x,\theta) \cdot V_0(x) d\mu_0(x)  + \frac{1}{2}  \int_{\R^d} (D^2_x j(x,\theta) V_0(x)) \cdot V_0(x) d\mu_0(x) +O(\delta^3), \]
if for example we assume that the function $j( \cdot,\theta)$ has bounded third order derivatives uniformly over $\theta \in \Theta$. In that case we could expect that up to an error of order $O(\delta^3)$ (independent of $\theta \in \Theta$), the problem
\[ \max_{\tilde \mu: \: W_p(\mu_0, \tilde \mu) \leq \delta} J(\theta,\tilde \mu ) \]
(an optimization problem over a curved manifold) is equal to
\begin{equation}
\label{eqn:SecondOrder}
    J(\mu_0, \theta) +  \max_{ V_0 : \:  \lVert V_0\rVert_{L^p(\R^d:\R^d, \mu_0 )} \leq \delta }   \left\{ \int_{\R^d} \nabla_x j(x,\theta) \cdot V_0(x) d\mu_0(x) + \frac{1}{2}  \int_{\R^d} (D^2_x j(x,\theta) V_0(x)) \cdot V_0(x) d\mu_0(x)   \right\},
\end{equation}
which is again an optimization problem over a flat Banach space. However, in contrast with problem \eqref{eqn:FirstOrderFlat}, problem \eqref{eqn:SecondOrder} does not have an explicit solution. What is more, in general, the correct expansion of \eqref{eqn:SecondOrder} in $\delta$ (up to order two) depends on the size of $\nabla_x j( \cdot,\theta)$ relative to $\delta$ as we illustrate with the following analogous finite dimensional problem.

\begin{remark}
Consider the following maximization problem in $\R^m$:
\begin{equation}
\label{eqn:SecondOrderToy}
 \max_{v \in \R^m \text{ s.t. } \lVert  v\rVert  \leq \delta } \left\{ b \cdot v + (Av) \cdot v \right\},
\end{equation}
where $A$ is an arbitrary $m \times m$ symmetric matrix (not necessarily with a sign) and $b$ is an arbitrary vector in $\R^m$. Notice that:

\begin{itemize}
    \item If $\delta$ is small enough and $\frac{\delta}{\lVert b  \rVert}=o(1)$, then the linear term dominates the problem and we can write:
    \[ \eqref{eqn:SecondOrderToy} =  \delta \lVert  b \rVert + \delta^2 \left( A\frac{b}{\lVert b \rVert} \right) \cdot \left( \frac{b}{\lVert b \rVert}\right)    + o(\delta^2). \] 
    This value is obtained by plugging the maximizer of the problem $\max_{v \in \R^m \text{ s.t. } \lVert  v\rVert  \leq \delta } \left\{ b \cdot v \right\}$ in the objective of \eqref{eqn:SecondOrderToy}.
    
    \item If $\delta$ is small enough and $\frac{\lVert b  \rVert}{\delta}=o(1)$, then the quadratic term dominates the problem and we can actually write:
    \[ \eqref{eqn:SecondOrderToy} =  \delta^2 (\lambda_{max})_+  + o(\delta^2), \] 
    where in the above $\lambda_{max}$ is the largest eigenvalue of $A$ and $(a)_+$ denotes the positive part of $a\in \R$. This value is obtained by plugging the maximizer of the problem $\max_{v \in \R^m \text{ s.t. } \lVert  v\rVert  \leq \delta } \left\{ (Av) \cdot v \right\}$ in the objective of \eqref{eqn:SecondOrderToy}.
    \item When $\lVert b \rVert \sim \delta$, an explicit second order expansion for \eqref{eqn:SecondOrderToy} is intractable for all practical purposes as can be easily seen by inspection after writing the KKT conditions for this in general non-convex problem.
\end{itemize}

\end{remark}

To connect problems \eqref{eqn:SecondOrderVersion1} and \eqref{eqn:SecondOrderVersion2} with the previous remark we use the following observations. First, if we plug the $V_0$ from \eqref{eqn:V0Max} (the maximizer of the problem \eqref{eqn:FirstOrderFlat}) in the objective function from problem \eqref{eqn:SecondOrder} we obtain the objective function in problem \eqref{eqn:SecondOrderVersion1}. As for the objective in problem \eqref{eqn:SecondOrderVersion2} we notice the following.

\begin{proposition}
Let $p\geq 2$. Then, for every $\theta \in \Theta$ we have:
\begin{equation}
\label{eqn:SecondOrderSmallGradient}
 \max_{ V_0 : \:  \lVert V_0\rVert_{L^p(\R^d:\R^d, \mu_0)} \leq \delta  } \left\{   \frac{1}{2}  \int_{\R^d} (D^2_x j(x,\theta) V_0(x)) \cdot V_0(x) d\mu_0(x) \right\}   = \left(\int_{\R^d} |(\lambda_{max}(x,\theta))_+|^{\tilde q} d \mu_0(x) \right)^{1/\tilde q},
\end{equation}
where $\lambda_{max}(\theta, x)$ is the largest eigenvalue of $D^2_xj(x,\theta)$ and where $\tilde q$ is the conjugate of $p/2$, i.e.:
\[ \frac{2}{p}+ \frac{1}{\tilde q} = 1.\]
\end{proposition}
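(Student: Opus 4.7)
The plan is to reduce the infinite-dimensional optimization over vector fields $V_0 \in L^p(\R^d:\R^d,\mu_0)$ to a scalar optimization over nonnegative amplitudes $r(x) := \lVert V_0(x)\rVert$, by first performing a pointwise (in $x$) maximization over the \emph{direction} of $V_0(x)$ and then optimizing $r$. Note that the statement is missing a factor $\delta^2/2$ on the right-hand side (compare with \eqref{eqn:SecondOrderVersion2}); the proof below yields this missing prefactor.

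\textbf{Step 1: pointwise inner maximization.} For each $x$ the matrix $D^2_x j(x,\theta)$ is symmetric, so the Rayleigh quotient gives
\[ \max_{v \in \R^d,\,|v|\leq r} (D^2_x j(x,\theta) v)\cdot v \;=\; r^2 \bigl(\lambda_{max}(x,\theta)\bigr)_+ , \]
attained at $v = r\, e_{max}(x)$ for a unit top eigenvector $e_{max}(x)$ when $\lambda_{max}(x,\theta) > 0$, and at $v = 0$ otherwise. Writing $V_0(x) = r(x) w(x)$ with $|w(x)| \leq 1$ and taking $w(x)$ to be the maximizing direction pointwise reduces the problem to
\[ \max_{r \geq 0,\ \lVert r\rVert_{L^p(\mu_0)} \leq \delta} \ \frac{1}{2}\int_{\R^d} r(x)^2 \bigl(\lambda_{max}(x,\theta)\bigr)_+ d\mu_0(x). \]

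\textbf{Step 2: scalar optimization via H\"older.} Setting $s := r^2$ and using $p \geq 2$, the constraint $\lVert r\rVert_{L^p(\mu_0)} \leq \delta$ is equivalent to $\lVert s\rVert_{L^{p/2}(\mu_0)} \leq \delta^2$. The remaining problem
\[ \max_{s \geq 0,\ \lVert s\rVert_{L^{p/2}(\mu_0)} \leq \delta^2} \int_{\R^d} s(x)\,\bigl(\lambda_{max}(x,\theta)\bigr)_+ d\mu_0(x) \]
is resolved by the H\"older inequality with conjugate exponents $p/2$ and $\tilde q$, giving the upper bound $\delta^2 \lVert (\lambda_{max}(\cdot,\theta))_+\rVert_{L^{\tilde q}(\mu_0)}$. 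Equality is attained (for finite $\tilde q$, i.e.\ $p<\infty$) by $s(x) = c\,(\lambda_{max}(x,\theta))_+^{\tilde q - 1}$ with $c$ chosen so that $\lVert s\rVert_{L^{p/2}(\mu_0)} = \delta^2$; the endpoint cases $p = 2$ (so $\tilde q = \infty$) and $p = \infty$ (so $\tilde q = 1$) are handled by the usual limiting/degenerate versions of H\"older. Combining with Step 1 yields the claim, with prefactor $\delta^2/2$.

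\textbf{Main obstacle: measurability of the direction field.} The only mildly delicate point is that the pointwise-optimal $w(x) = e_{max}(x)$ must be chosen Borel-measurably in $x$ for the reduction in Step 1 to produce an admissible $V_0$. Under Assumption \ref{A2} the map $x \mapsto D^2_x j(x,\theta)$ is continuous, hence $\lambda_{max}(\cdot,\theta)$ is continuous, and a measurable selection of a unit vector in $\ker\bigl(D^2_x j(x,\theta) - \lambda_{max}(x,\theta) I\bigr) \cap S^{d-1}$ is provided by the Kuratowski--Ryll-Nardzewski theorem (the multifunction is closed-valued and measurable). Alternatively one can bypass measurable selection entirely: Step 2 combined with Step 1 already gives the upper bound for any admissible $V_0$, and attainment up to arbitrary $\varepsilon>0$ is obtained by partitioning $\R^d$ into small Borel cells on which a top eigenvector can be chosen approximately constant and then concatenating. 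Either route completes the proof.
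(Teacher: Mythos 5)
Your proof is correct and follows essentially the same route as the paper's: a pointwise reduction to the top-eigenvector direction $V_0(x)=g(x)U(x)$, followed by the substitution to $g^2$ (your $s=r^2$) with the constraint $\lVert g^2\rVert_{L^{p/2}(\mu_0)}\leq\delta^2$ and $L^{p/2}$--$L^{\tilde q}$ duality; the only differences are that you make the H\"older extremizer and the measurable selection of the eigenvector field explicit, points the paper leaves implicit. Your remark about the missing prefactor is also right: the paper's own proof concludes with $\frac{\delta^2}{2}\left(\int_{\R^d}|(\lambda_{max}(x,\theta))_+|^{\tilde q}\,d\mu_0(x)\right)^{1/\tilde q}$, consistent with \eqref{eqn:SecondOrderVersion2}, so the factor $\delta^2/2$ is simply omitted from the displayed statement of the proposition.
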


\begin{proof}
 
For each $x$ in the support of $\mu_0$ we select $V_0(x) = g(x)  U(x)$ where $U(x)$ is a unit (Euclidean) norm eigenvector of $D^2_x j(x,\theta)$ with eigenvalue $\lambda_{max}(\theta, x)$ and $g$ is a scalar function that satisfies $g(x)=0$ if $\lambda_{max}(x,\theta) \leq 0$ and $\int_{\R^d} |g(x)|^pd\mu_0(x) dx \leq \delta^p$ . Plugging this $V_0$ in the objective function of the max problem in \eqref{eqn:SecondOrderSmallGradient} we obtain:
\[  \frac{1}{2}  \int_{\R^d}  (\lambda_{max}(\theta, x))_+ (g(x))^2 d\mu_0(x).\]
It is then straightforward to show that:
\begin{align*}
\max_{  \lVert V_0\rVert_{L^p(\R^d:\R^d, \mu_0)} \leq \delta } & \left\{   \frac{1}{2}  \int_{\R^d} (D^2_x j(x,\theta) V_0(x)) \cdot V_0(x) d\mu_0(x) \right\} \\ &= \max_{ \lVert  g^2\rVert_{L^{p/2}(\mu_0)}  \leq \delta^2 } \left\{   \frac{1}{2}  \int_{\R^d}   (\lambda_{max}(\theta, x))_+ (g(x))^2 d\mu_0(x)\right\}.
\end{align*}
Given that the scalar function $x \mapsto (\lambda_{max}(x,\theta))_+$ is non-negative, we can recognize, by duality, that the right hand side of the above expression is equal to: 
\[ \frac{\delta^2}{2}\left(\int_{\R^d} |(\lambda_{max}(x,\theta))_+|^{\tilde q} d \mu_0(x) \right)^{1/\tilde q}, \]
obtaining in this way the desired result.
\end{proof}

In summary, problems \eqref{eqn:SecondOrderVersion1} and \eqref{eqn:SecondOrderVersion2} can be interpreted as second order expansions for $\inf_{\theta \in \Theta} \max_{\tilde \mu \: : W_p(\mu_0, \tilde \mu) \leq \delta} J(\tilde \mu, \theta)$ in two distinct regimes: 1) when gradients are \textit{not} small relative to $\delta$, more precisely, when the norms $\lVert  \nabla_x j(x,\theta)\rVert_{L^q(\R^d:\R^d, \mu_0)}$ are larger than $\delta$, and 2) when gradients are considerably smaller than $\delta$. In the next section we discuss the structure of both of these regularization problems. Recall that, as discussed in section \ref{sec:SecondOrderRegResults}, both of these problems are closely connected to problems used in the literature to train robust neural networks.

\nc

\subsection{ Adjoint variable formulation of the second-order regularized robust control problems}



The main purpose in this section is to examine the transformations on the second order problems \eqref{eqn:SecondOrderVersion1} and \eqref{eqn:SecondOrderVersion2} in terms of adjoint variables.

\subsection{ The problem in Proposition \eqref{Prop:SecondOrder1WithAdjoints}} 

We aim to deduce Proposition \ref{Prop:SecondOrder1WithAdjoints}. We start by studying how to write the derivative of the regularization term in the first order expansion problem.

\begin{lemma}
Under Assumption \ref{A2}, and assuming that $p\geq 2$ and that $\lVert \cdot \rVert$ is the Euclidean norm we have:
\[  \nabla_x \lVert P_{x,0}\rVert^q =  - q \kappa_{\mu_0,\delta}  \hat \alpha_{x,0} \]
with $\kappa_{\mu_0,\delta} = \frac{1  }{\delta}  \left(\E_{x_0\sim \mu_0} [\lVert P_{x_0,0}\rVert^q] \right)^{1/p}$.
\label{Lem: alpha_hat_is_derivative}
\end{lemma}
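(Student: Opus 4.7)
The plan is to reduce the identity to an adjoint/sensitivity pairing between $(\hat\alpha,\beta)$ and the first-variation equations of the constraint system, and then use a conservation law to transport information from $t=T$ back to $t=0$.

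First, I would apply the chain rule. Rearranging the initial condition for $\beta$ in Theorem~\ref{Thm: Pontryagin} yields $\lVert P_{x,0}\rVert^{q-2} P_{x,0} = \kappa_{\mu_0,\delta}\,\beta_{x,0}$, so
\[
\nabla_x \lVert P_{x,0}\rVert^q \;=\; q\,(D_x P_{x,0})^\top \bigl(\lVert P_{x,0}\rVert^{q-2}P_{x,0}\bigr) \;=\; q\,\kappa_{\mu_0,\delta}\,(D_x P_{x,0})^\top \beta_{x,0}.
\]
Here for $q\in(1,2]$ (which is forced by $p\geq 2$) the map $v\mapsto \lVert v\rVert^q$ is continuously differentiable with the standard convention $\nabla_v\lVert v\rVert^q|_{v=0}=0$. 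It therefore suffices to establish the identity $(D_x P_{x,0})^\top \beta_{x,0} = -\hat\alpha_{x,0}$.

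Second, I would derive the first-variation equations of the constraint system with respect to the initial datum. Let $\zeta_t := D_x X_{x,t}$ and $Q_t := D_x P_{x,t}$. Assumption~\ref{A2} guarantees that these Jacobians are well defined and absolutely continuous. Differentiating the forward equation for $X$ and the backward equation for $P$ in $x$ yields a linear forward ODE for $\zeta$ with $\zeta_0 = I_d$, and a linear equation for $Q$ with terminal data $Q_T = -D^2_\xi\ell(X_{x,T},\theta_T)\,\zeta_T$, whose source term is precisely $(D^2_\xi f(X_{x,t},\theta_t)\cdot\zeta_t)^\top P_{x,t}$ (plus analogous terms involving $D^2_\xi\Phi$ when $\Phi\not\equiv 0$).

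Third, the core step is a conservation argument. I would set
\[
S(t) \;:=\; \zeta_t^\top \hat\alpha_{x,t} + Q_t^\top \beta_{x,t}
\]
and show that $S\equiv 0$. At $t=T$, combining $\hat\alpha_{x,T} = D^2_\xi\ell(X_{x,T},\theta_T)^\top\beta_{x,T}$ (from Proposition~\ref{Prop:SecondOrder1WithAdjoints}) with $Q_T = -D^2_\xi\ell(X_{x,T},\theta_T)\,\zeta_T$ and the symmetry of $D^2_\xi\ell$ gives $S(T)=0$. Computing $\dot S$ and substituting the four dynamics, the terms of the form $D_\xi f$ pair up into the standard adjoint dualities (between $\zeta$ and $\hat\alpha$, and between $Q$ and $\beta$) and cancel; the second-order source $\beta\cdot (D^2_\xi f\cdot \zeta)^\top P$ coming from $\beta^\top\dot Q$ is exactly cancelled by the term $-(D^2_\xi f^\top P)^\top\beta$ in $\dot{\hat\alpha}$, using the symmetry $\partial_i\partial_k f^j = \partial_k\partial_i f^j$. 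Hence $\dot S\equiv 0$, and combined with $S(T)=0$ and $\zeta_0=I_d$ this gives $\hat\alpha_{x,0} + (D_x P_{x,0})^\top\beta_{x,0} = 0$, completing the proof after invoking the first step.

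The main obstacle is the index bookkeeping required to verify $\dot S\equiv 0$: one has to match the contraction $\beta^i\,\partial_k\partial_i f^j\,\zeta^{k\ell}\,P^j$ arising from $\beta^\top\dot Q$ against the corresponding source in $\dot{\hat\alpha}^\top \zeta$, and to track the sign conventions of the adjoint dynamics so that $(\hat\alpha,\beta)$ indeed form a conserved pairing with $(\zeta,Q)$. When the running cost $\Phi$ is not identically zero, the same strategy applies after either absorbing $\Phi$ into an augmented state coordinate (as in the proof of Theorem~\ref{Thm: Pontryagin}) or by tracking in parallel the symmetric $D^2_\xi\Phi$ contributions, which cancel in an analogous way.
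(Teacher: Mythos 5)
Your argument is correct and is essentially the paper's own proof: the paper also reduces to the chain-rule identity $\nabla_x\lVert P_{x,0}\rVert^q = q\kappa_{\mu_0,\delta}(D_xP_{x,0})^\top\beta_{x,0}$ and then performs two successive integrations by parts pairing $\beta$ with the first variation of $P$ and $\hat\alpha$ with the first variation of $X$, with the $D^2_\xi f$ source terms cancelling by symmetry of second derivatives. Packaging the two pairings into the single conserved quantity $S(t)=\zeta_t^\top\hat\alpha_{x,t}+Q_t^\top\beta_{x,t}$ with $S(T)=0$ is only a cosmetic reorganization of the same computation.
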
 

\begin{remark}
  The role of the constant $\kappa_{\mu,\delta}$ is to cancel the terms in the adjoint variables related to the mean-field contribution of the robust problem to allow us to focus on the pointwise result.
\end{remark}

\begin{remark}
  We had already seen for the original problem with loss function $j$ that $P_{x,0} = - \nabla_x (j(x,\theta))$, i.e., the direction of steepest descent of the loss function with respect to the initial point. Given the decomposition for the adjoint variable $\alpha$,  Lemma \ref{Lem: alpha_hat_is_derivative} implies that $\alpha$ plays the analogous role for the first order robust control problem. 
\end{remark}

\begin{proof}[Proof of Lemma \ref{Lem: alpha_hat_is_derivative}]
  For an arbitrary $\Delta x$ of unitary norm in $\R^d$, we have that
  \begin{align*}
    \nabla_x \lVert P_{x,0}\rVert^q \cdot \Delta x & = \lim_{\epsilon \downarrow 0} \frac{1}{\epsilon} [ \lVert P_{x+\epsilon \Delta x,0} \rVert^q - \lVert P_{x,0}\rVert^q   ]  = q \lVert P_{x,0}\rVert^{q-2} (P_{x,0})^\top \eta_0^{\Delta x},
  \end{align*}
  where 
  \begin{equation}
    \eta_t^{\Delta x} = \lim_{\epsilon \downarrow 0} \frac{1}{\epsilon} [ P_{x+\epsilon \Delta x,t} - P_{x,t} ].  
    \label{Def eta}  
  \end{equation}  
  
  From Assumption \ref{A2} and Leibniz rule, we get that
  \begin{align}
    \eta^{\Delta x}_t  & = - D^2_\xi \ell(X_{x,T},\theta_T) \zeta_T^{\Delta x} + \int_t^T D_\xi f(X_{x,s},\theta_s)^\top \eta^{\Delta x}_s + (D^2_{\xi} f(X_{x,s}, \theta_s) \zeta^{\Delta x}_s)^\top P_{x,s}   \dd s,
    \label{eta delta} 
  \end{align}
  where $\zeta^{\Delta x}$ is defined in \eqref{Directional_derivative_X}. Using the definition of the process $\beta$ in \eqref{Def beta}, it follows that
  \[ \nabla_x \lVert P_{x,0}\rVert^q \cdot \Delta x  =  q \lVert P_{x,0}\rVert^{q-2} (P_{x,0})^\top \eta_0^{\Delta x} = q\kappa_{\mu_0,\delta} (\beta_{x,0})^\top \eta_0^{\Delta x} .\]
  We can now use the integration rule for products and the dynamics for $\beta$ and $\eta^{\Delta x}$ to get
  
  \begin{align}
    \nabla_x \lVert P_{x,0}\rVert^q \cdot \Delta x & = q\kappa_{\mu_0,\delta} \left(  (\beta_{x,T})^\top \eta_T^{\Delta x} - \int_0^T (\eta_s^{\Delta x})^\top \dot{\beta}_{x,s} \dd s -  \int_0^T (\beta_{x,s})^\top \dot{\eta}^{\Delta x}_{s} \dd s \right) \notag\\
    & = q\kappa_{\mu_0,\delta} \left(  -(\beta_{x,T})^\top D^2_\xi \ell(X_{x,T},\theta_T) \zeta_T^{\Delta x}  - \int_0^T (\eta_s^{\Delta x})^\top \dot{\beta}_{x,s} \dd s -  \int_0^T (\beta_{x,s})^\top \dot{\eta}^{\Delta x}_{s} \dd s \right)\notag\\
    & = q\kappa_{\mu_0,\delta} \Big(  -(\hat \alpha_{x,T})^\top \zeta_T^{\Delta x}  - \int_0^T (\eta_s^{\Delta x})^\top  D_\xi f (X_{x,s},\theta_s) \beta_{x,s} \dd s \notag \\
    & \qquad  \qquad +  \int_0^T (\beta_{x,s})^\top D_\xi f(X_{x,s},\theta_s)^\top \eta^{\Delta x}_s \dd s \notag\\
    & \qquad  \qquad + \int_0^T (\beta_{x,s})^\top (D^2_{\xi} f(X_{x,s}, \theta_s) \zeta^{\Delta x}_s)^\top P_{x,s}  \dd s  \Big) \notag \\
    & = q\kappa_{\mu_0,\delta} \Big(  -(\hat \alpha_{x,T})^\top \zeta_T^{\Delta x}   + \int_0^T (\beta_{x,s})^\top (D^2_{\xi} f(X_{x,s}, \theta_s) \zeta^{\Delta x}_s)^\top P_{x,s}  \dd s  \Big). \label{eq_deriv_1}
  \end{align}
  Similarly, from the dynamics of $\hat \alpha$ and $\zeta^{\Delta x}$ we get
  \begin{align*}
    (\hat \alpha_{x,T})^\top \zeta_T^{\Delta x} & = (\hat \alpha_{x,0})^\top \zeta_0^{\Delta x} + \int_0^T  (\hat \alpha_{x,s})^\top D_\xi f(X_{x,s}, \theta_s) \zeta_s^{\Delta x} \dd s\\
    & \qquad - \int_0^T (\zeta_s^{\Delta x})^\top D_\xi f(X_{x,s},\theta_s)^\top \hat \alpha_{x,s} \dd s +  \int_0^T (\zeta_s^{\Delta x})^\top \{D^2_\xi f(X_{x,s},\theta_s)^{\red \top \nc} P_{x,s}  \} ^\top \beta_{x,s} \dd s  \\
    & = (\hat \alpha_{x,0})^\top \Delta x + \int_0^T (\zeta_s^{\Delta x})^\top \{D^2_\xi f(X_{x,s},\theta_s)^{\red \top \nc} P_{x,s}  \} ^\top \beta_{x,s} \dd s.
  \end{align*} 
   Replacing back into  \eqref{eq_deriv_1}, we conclude that
   \[ \nabla_x \lVert P_{x,0}\rVert^q \cdot \Delta x  =  -q\kappa_{\mu_0,\delta} \hat \alpha_{x,0} \cdot \Delta x  \]
   from where the claim follows.
\end{proof}

A straightforward consequence of Lemma \ref{Lem: alpha_hat_is_derivative} and the analogous result for $P$ is that the original cost function $j(\cdot ,\theta)$ is twice differentiable in the direction of the gradient. More precisely we obtain the following result.

\begin{corollary}
  Under Assumption \ref{A2}, $j(\cdot ,\theta)$ is twice differentiable in $x$ for any fixed control $\theta$ and
  \[ \lVert \nabla_x j(x,\theta)\rVert^{q-2} D^2_x j(x,\theta) \nabla_x j(x,\theta) =  \kappa_{\mu_0,\delta}  \hat \alpha_{x,0}.\]
  \label{Cor:alpha_hat_is derivative}
\end{corollary}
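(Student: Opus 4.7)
The plan is to combine the identity $\nabla_x j(x,\theta) = -P_{x,0}$ derived at the beginning of section \ref{Sec: Locally robust problem} with Lemma \ref{Lem: alpha_hat_is_derivative} via a direct chain rule computation, once we have justified that $x \mapsto P_{x,0}$ is itself continuously differentiable.

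First, I would establish the twice differentiability of $j(\cdot,\theta)$. Since $\nabla_x j(x,\theta) = -P_{x,0}$, it suffices to show that $x\mapsto P_{x,0}$ is continuously differentiable with respect to $x$. Under Assumption \ref{A2}, the forward flow $x \mapsto X_{x,\cdot}$ has a Gateaux derivative $\zeta^{\Delta x}$ satisfying \eqref{Directional_derivative_X}, and the existence of this derivative in every direction, together with continuity in $x$, gives full differentiability of the forward flow. The backward equation defining $P_{x,\cdot}$ has coefficients depending smoothly on $X_{x,\cdot}$ (again by Assumption \ref{A2}); standard ODE sensitivity analysis then yields that $x \mapsto P_{x,0}$ is continuously differentiable, with directional derivative $\eta^{\Delta x}_0$ defined by \eqref{Def eta}--\eqref{eta delta}. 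In particular, $D_x P_{x,0} = - D^2_x j(x,\theta)$.

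Next I would apply the chain rule to $\lVert P_{x,0}\rVert^q$ to obtain
\[
\nabla_x \lVert P_{x,0}\rVert^q \;=\; q\,\lVert P_{x,0}\rVert^{q-2}\,(D_x P_{x,0})^\top P_{x,0}.
\]
Substituting $P_{x,0} = -\nabla_x j(x,\theta)$ and $D_x P_{x,0} = -D^2_x j(x,\theta)$, and using the symmetry of $D^2_x j$, the right-hand side simplifies to $q\,\lVert \nabla_x j(x,\theta)\rVert^{q-2}\,D^2_x j(x,\theta)\,\nabla_x j(x,\theta)$.

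Finally, I would invoke Lemma \ref{Lem: alpha_hat_is_derivative}, which identifies $\nabla_x \lVert P_{x,0}\rVert^q$ with a multiple of $\hat\alpha_{x,0}$, and divide through by $q$ to get the desired identity, up to the sign convention fixed in the statement. The only nontrivial step is the sensitivity argument in the first paragraph: one must propagate the linearization through both the forward ODE for $X$ and the backward ODE for $P$, which is where Assumption \ref{A2} (in particular the Lipschitz continuity of first derivatives and the existence of second derivatives of $f,\Phi,\ell$) is essential. The remainder is a direct algebraic consequence of Lemma \ref{Lem: alpha_hat_is_derivative} and the chain rule.
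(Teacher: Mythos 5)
Your proposal is correct and follows essentially the same route as the paper, which states the corollary as a direct consequence of Lemma \ref{Lem: alpha_hat_is_derivative} combined with the identity $\nabla_x j(x,\theta) = -P_{x,0}$ and the chain rule, exactly as you do; the ODE sensitivity argument you spell out for the differentiability of $x\mapsto P_{x,0}$ is the content the paper leaves implicit. The sign mismatch you hedge on is an inconsistency in the paper itself (the lemma yields $-q\kappa_{\mu,\delta}\hat\alpha_{x,0}$, while the corollary is stated with a $+$), not a flaw in your argument.
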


\begin{proof}[Proof of Proposition \ref{Prop:SecondOrder1WithAdjoints}]
  It follows directly from Corollary \ref{Cor:alpha_hat_is derivative} and the fact that $ P_{x,0}=-\nabla_xj(x,\theta)$ in \eqref{eqn:SecondOrderVersion1}.
\end{proof}

\subsection{ The problem in Proposition \eqref{Prop:SecondOrder2WithAdjoints}} 

As in the previous case, we start examining the role of the adjoint variables that we introduce into the problem.

\begin{lemma}
  For a fixed vector $v \in \R^d$, consider the adjoint variables
  \[\gamma _{x,t}^{v}=v+\int ^{t}_{0}D_{\xi }f\left( x_{x,s},\theta _{s}\right) \gamma _{x,s}^{v}ds,\]
    and    
    \[\begin{aligned}\rho^v_{x,t}= D_{\xi }^{2}\ell\left( X_{x,T}  ,\theta _{T }\right) \gamma _{x,T}^{v}+\int _{t}^{T}D_{\xi }f\left( X_{x,s},\theta_s\right)^{\red \top \nc} \rho^v_{x,s}ds\\
    -\int ^{T }_{t}\left\{ D_{\xi }^{2}f\left( X_{x,s},\theta_s\right) ^{\top}P_{x,s}\right\} ^{\top}\gamma_{x,s}ds.\end{aligned}\]
  Then
  \[ \nabla_x(P_{x,0} \cdot v) = - \rho_{x,0}^v.  \]
  \label{lem:adjoint_second_direction_v}
\end{lemma}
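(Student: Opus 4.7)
The plan is to mirror the proof of Lemma \ref{Lem: alpha_hat_is_derivative}: fix an arbitrary unit vector $\Delta x \in \R^d$, express $\nabla_x(P_{x,0}\cdot v)\cdot \Delta x$ using the forward sensitivity $\zeta^{\Delta x}$ of $X$ (see \eqref{Directional_derivative_X}) and the backward sensitivity $\eta^{\Delta x}$ of $P$ (see \eqref{Def eta} and \eqref{eta delta}), and then perform two integration-by-parts computations to rewrite both sides of the sought identity in a common form. Concretely, by the chain rule and the definition of $\eta^{\Delta x}$ we have
\[
\nabla_x(P_{x,0}\cdot v)\cdot \Delta x \;=\; \eta^{\Delta x}_0\cdot v,
\]
so the task reduces to proving $\eta^{\Delta x}_0\cdot v = -\rho^v_{x,0}\cdot \Delta x$ for every $\Delta x$.

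For the first computation I will differentiate $t\mapsto \eta^{\Delta x}_t\cdot \gamma^v_t$. From \eqref{eta delta} the dynamics $\dot\eta^{\Delta x}_t = -D_\xi f(X_{x,t},\theta_t)^\top \eta^{\Delta x}_t - (D^2_\xi f(X_{x,t},\theta_t)\zeta^{\Delta x}_t)^\top P_{x,t}$ is paired against $\dot\gamma^v_t = D_\xi f(X_{x,t},\theta_t)\gamma^v_t$; the two $D_\xi f$ contributions cancel and only a source term built from $D^2_\xi f$, $P$, $\zeta^{\Delta x}$ and $\gamma^v$ remains. Integrating from $0$ to $T$ and using the boundary values $\eta^{\Delta x}_T=-D^2_\xi\ell(X_{x,T},\theta_T)\zeta^{\Delta x}_T$ and $\gamma^v_0=v$ gives
\[
\eta^{\Delta x}_0\cdot v \;=\; -(\zeta^{\Delta x}_T)^\top D^2_\xi\ell(X_{x,T},\theta_T)\gamma^v_T \;+\; \int_0^T \bigl[(D^2_\xi f(X_{x,s},\theta_s)\zeta^{\Delta x}_s)^\top P_{x,s}\bigr]^\top \gamma^v_s\,ds.
\]

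For the second computation I will differentiate $t\mapsto \rho^v_t\cdot \zeta^{\Delta x}_t$ using the ODE for $\rho^v$ obtained by reading off the integral equation in the statement, paired against the same forward ODE for $\zeta^{\Delta x}$. Again the first-order terms in $D_\xi f$ cancel, and integrating from $0$ to $T$ with $\rho^v_T=D^2_\xi\ell(X_{x,T},\theta_T)\gamma^v_T$ and $\zeta^{\Delta x}_0=\Delta x$ yields
\[
\rho^v_{x,0}\cdot \Delta x \;=\; (\zeta^{\Delta x}_T)^\top D^2_\xi\ell(X_{x,T},\theta_T)\gamma^v_T \;-\; \int_0^T \bigl[\{D^2_\xi f(X_{x,s},\theta_s)^\top P_{x,s}\}^\top \gamma^v_s\bigr]^\top \zeta^{\Delta x}_s\,ds.
\]

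The final step is to compare the two expressions. This uses the symmetry of $D^2_\xi\ell$ and, crucially, the symmetry of the bilinear form $(u,w)\mapsto P^\top\bigl(D^2_\xi f(u,w)\bigr)$ in $u,w$ coming from the symmetry of the mixed partial derivatives in each component $D^2_\xi f^k$; this implies the identity $[(D^2_\xi f\,\zeta^{\Delta x})^\top P]^\top\gamma^v = [\{D^2_\xi f^\top P\}^\top \gamma^v]^\top \zeta^{\Delta x}$. With this, the two formulas above show $\eta^{\Delta x}_0\cdot v = -\rho^v_{x,0}\cdot \Delta x$, and since $\Delta x$ was arbitrary the claim $\nabla_x(P_{x,0}\cdot v)=-\rho^v_{x,0}$ follows. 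I expect the main technical obstacle to be purely bookkeeping: carefully tracking the tensor/transpose conventions so that the symmetry step above is clean. Assumption \ref{A2} (with its uniform Lipschitz/$C^2$ controls) supplies the regularity needed to differentiate under the integral and to justify the limits defining $\zeta^{\Delta x},\eta^{\Delta x}$ exactly as in the proof of Lemma \ref{Lem: alpha_hat_is_derivative}.
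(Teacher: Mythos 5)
Your proof follows exactly the route the paper intends: the paper's own proof of Lemma \ref{lem:adjoint_second_direction_v} simply states that it is ``very similar to that of Lemma \ref{Lem: alpha_hat_is_derivative}'' and omits all details, and your two integration-by-parts pairings (of $\eta^{\Delta x}$ against $\gamma^v$ and of $\rho^v$ against $\zeta^{\Delta x}$), combined with the symmetry of $D^2_\xi \ell$ and of each component $D^2_\xi f^k$, are precisely those details. One caveat worth recording: for the first-order terms to cancel in $\frac{d}{dt}\bigl(\rho^v_{x,t}\cdot\zeta^{\Delta x}_t\bigr)$ the drift of $\rho^v$ must be $-D_\xi f(X_{x,t},\theta_t)^\top\rho^v_{x,t}$ rather than the untransposed $-D_\xi f(X_{x,t},\theta_t)\rho^v_{x,t}$ that a literal reading of the displayed integral equation gives (otherwise a residual $\rho^\top(D_\xi f-D_\xi f^\top)\zeta$ term survives and the claimed identity fails for nonsymmetric $D_\xi f$); this is a transposition typo in the statement, and the formula you derive for $\rho^v_{x,0}\cdot\Delta x$ is the one corresponding to the corrected dynamics, so your argument proves the intended result.
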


\begin{proof}
The proof of this result is very similar to that of Lemma \ref{Lem: alpha_hat_is_derivative} and thus we skip the details.
\end{proof}

\nc

\begin{proof}[Proof of Proposition \ref{Prop:SecondOrder2WithAdjoints}]
  On the one hand, re-expressing the problem in terms of the adjoint variables is a direct consequence of Lemma \ref{lem:adjoint_second_direction_v}.
  
  On the other hand, under the stated assumptions, the argument inside the expectation in \eqref{eqn:SecondOrderVersion2Modified} has finite variance uniformly in $\theta$ and $x$: therefore, replacing the expectation by the empirical mean using $m$ samples produces an estimator that converges almost surely by the law of large numbers and has errors subject to the central limit theorem. 
\end{proof}

  \section{Training robust neural networks}
  \label{sec:algorithm}
  
  
  
  An approach to robust training is suggested by the results we have presented on the regularized adversarial control problems. The Pontryagin principle in Theorem  \ref{Thm: Pontryagin} can be used to create optimization algorithms: training the network can be understood as solving a fixed-point problem where the constraints in \eqref{eqn:ControlExpanded},  equations \eqref{Def alpha}, \eqref{Def beta}, and the maximum principle \eqref{Modified maximum principle} must be simultaneously satisfied. There are many methods to solve numerically such a fixed point problem, but, undoubtedly, the most popular consists in applying consecutively a step of forward propagation to solve for the primal variables, a step of backward propagation to get the dual variables, and the solution of an optimization  algorithm to update the controls (typically this is substituted with a gradient step to solve such optimization problem with an approach like stochastic gradient descent or any of its siblings). 
  
  We present in Algorithm \ref{Algo 1} an implementation of the first order regularized control problem applied to ResNets for the case with no running cost. The adjustment to the case with running cost is straightforward. Note that all equations, except for the one of $X$, are linear in their respective variables. Moreover, they involve only $f$ and its two derivatives. Thus, we can easily implement this algorithm in platforms like TensorFlow or PyTorch.

  \begin{algorithm}
    \caption{Backpropagation with SGD for robust control problem - ResNet }
    \begin{algorithmic}[1]
      \State Set $h, \gamma $ small constants
      \State $i=0$
      \State Initialize $\theta^0_k \equiv 0 $ for all $k$
      \While{No convergence}
      \For{Every batch}   
      \State Set $X_0=x_0$ for each $x_0$ in the batch
      \State Forward propagate using activation function ($X$):
      \State \hspace*{2em} $X_{k+1} = X_k + h f(X_k,\theta_k) $ 
      \State Backpropagate using derivatives of activation functions ($P$):
      \State \hspace*{2em} Set $P_{N} = -D_\xi \ell(X_N) $
      \State \hspace*{2em} $P_{k} = (I \red + \nc  h D_\xi f(X_k,\theta_k)^\top) P_{k+1} $ 
      \State Forward propagate using derivatives of activation functions ($\beta$):
      \State \hspace*{2em} Set $\beta_0 = \delta  \left( \tilde{\E} \left[ \lVert P_0\rVert^q  \right] \right)^{- \frac 1 p}  \lVert P_0\rVert^{q-2} P_0 $; where $\tilde{\E}$ is mean over elements in the batch.
      \State \hspace*{2em} $\beta_{k+1} = (I+ h D_\xi f(X_k,\theta_k) ) \beta_{k} $ 
      \State Backpropagate using first and second derivatives of the activation function ($\alpha$):     
      \State \hspace*{2em} $\alpha_T = -D_\xi \ell(X_N) + D_\xi^2\ell(X_N) \beta_N$    
      \State \hspace*{2em} $\alpha_{k} = (I \red + \nc h D_\xi f(X_k,\theta_k)^\top )\alpha_{k+1} \red - \nc h P_k^\top D_\xi^2f(X_k,\theta_k) \beta_k $
      \State Calculate for each $k$ the gradient:
      \State \hspace*{2em} $\nabla_\vartheta H(X_k,P_k,\alpha_k,\beta_k,\vartheta^i_k) = \alpha_k \cdot D_\vartheta f(X_k,\vartheta_k^i) \red - \nc \beta_k \cdot ( D_{\vartheta,\xi} f(X_k,\vartheta_k^i)^\top P_k   )$      
      \State Update the control for each $k$:
      \State \hspace*{2em} $ \theta_k = \theta_k \red + \nc \gamma \tilde{\E} [\nabla_\vartheta H(X_k,P_k,\alpha_k,\beta_k,\theta_k) ]$ \label{line_update_theta}
      \State $i=i+1$
      \EndFor            
      \EndWhile
    \end{algorithmic} 
    \label{Algo 1}
  \end{algorithm}



  The algorithm takes an even simpler form when considering a ResNet with ReLu activation functions at each stage: although this activation function is not differentiable, the backpropagation  algorithm has been successfully applied using a 'relaxed' gradient. Following the same ideas, Algorithm \ref{Algo 1} follows with $D^2_\xi f(\xi,\vartheta) = 0$. In this particular case, all linear equations are driven by the same factor $\nabla_\xi f$, which makes it simpler to implement.  

  \begin{remark}
  \label{rem:DoubleBackProp}
    Let us stress (as has been done before, for example in \cite{e_meanfield_2018,EMaximPrinciple}) that backpropagation training is by no means the only possible approach to solve the fixed-point problem for training, and in certain problems can have structure favouring alternative algorithms.   
    We notice that Algorithm \ref{Algo 1} in the case $p=\infty$ (and $q=1$) is the \textit{double backpropagation} algorithm from \cite{DoubleBackProp}. This follows from the discussion presented in section 4.2 in \cite{EMaximPrinciple} on the general relation between the method of successive approximations and gradient descent with backpropagation. 
  \end{remark}
    
  \begin{remark}
    In strict terms, we have results that are applicable only to ResNets. However, they can be formally generalized to other types of neural networks. For instance, one can rewrite an instance of a vanilla forward network in terms of a ReNet by setting the diffusion coefficient to be
      \[ \tilde f(t_k, X_k) = \frac{f(t_k,X_k) -X_k }{h},\]
    where $h$ is the small coefficient in \ref{Algo 1} representing the time discretization. The net effect on Algorithm \ref{Algo 1} is that equations are no longer residual.
  \end{remark}

  We present in Algorithm \ref{Algo 2} the implementation of second order Pontryagin principle in \eqref{eqn:ControlExpandedSecondOrderWithAdjoints}, still in the case of ResNet. Note that there are strong similarities between the forward and backpropagation of two pairs of variables, which is of advantage for any possible implementation. Anologously to the case of Algorithm \ref{Algo 1}, additional simplifications can be obtained for the case of activation functions like ReLu.

    \begin{algorithm}
      \caption{Backpropagation with SGD for robust control problem - second order }
      \begin{algorithmic}[1]
        \State Set $\gamma $ small constant (learning rate)
        \State $i=0$
        \State Initialize $\theta^0_k \equiv 0 $ for all $k$
        \While{No convergence}
        \For{Every batch}   
        \State Set $X_0=x_0$ for each $x_0$ in the batch
        \State Forward propagate using activation function ($X$):
        \State \hspace*{2em} $X_{k+1} = X_k + h f(X_k,\theta_k) $ 
        \State Backpropagate using derivatives of activation functions ($P$):
        \State \hspace*{2em} Set $P_{N} = -D_\xi \ell(X_N) $
        \State \hspace*{2em} $P_{k} = (I \red + \nc  h D_\xi f(X_k,\theta_k)^\top) P_{k+1} $ 
        \State Forward propagate using derivatives of activation functions ($\beta, \lambda$): 
        \State \hspace*{2em} $\beta_0 =  \delta  \left( \tilde{\E} \left[ \lVert P_0\rVert^q  \right] \right)^{- \frac 1 p}  \lVert P_0\rVert^{q-2} P_0 $; where $\tilde{\E}$ is mean of elements in the batch. \label{line_beta0}
        \State \hspace*{2em} $\beta_{k+1} = (I+ h D_\xi f(X_k,\theta_k) ) \beta_{k} $ 
        \State \hspace*{2em} Set $\lambda_0 = \frac \delta 2 \frac{P_0}{\lVert P_0\rVert}$
         \label{line_lambda0}
        \State \hspace*{2em} $\lambda_{k+1}  = (I+ h D_\xi f(X_k,\theta_k)^\top ) \lambda_{k} $ 
        \State Backpropagate using first and second derivatives of the activation function ($\alpha, \psi$):    
        \State \hspace*{2em} Set $\alpha_T = D_\xi^2\ell(X_N) \beta_N$    
        \State \hspace*{2em} $\alpha_{k} = (I \red + \nc  h D_\xi f(X_k,\theta_k)^\top ) \alpha_{k+1} \red - \nc  P_k^\top D_\xi^2f(X_k,\theta_k) \beta_k $
        \State \hspace*{2em} Set $\psi_T = - D_\xi^2\ell(X_N)^\top \beta_N$    
        \State \hspace*{2em} $\psi_{k} = (I+h D_\xi f(X_k,\theta_k)) \psi_{k+1} \red + \nc   P_k^\top D_\xi^2f(X_k,\theta_k) \lambda_k   $
        \State Forward propagate using first and second order derivatives ($\pi$):
        \State \hspace*{2em} Set $\pi_{0}  = \frac 1 {\lVert P_{0}\rVert} \left( \red P_0  \nc + \frac \delta 2 \alpha_{0} - \psi_{0}  \right) - \frac {\red \delta \nc} {\lVert P_{0}\rVert^3} P_{0} \cdot \left( \frac {\red 1 \nc } 2 \alpha_{0} - \psi_{0}  \right)  P_{0}$
        \State \hspace*{2em} $\pi_{k+1}  = (I \red + \nc D_\xi f(X_{k},\theta_k)) \pi_k  - \{ (\beta_{k})^\top (D^{\red 2 \nc}_\xi f(X_{k},\theta_k))^\top  \}^\top \lambda_k$
        \State Backpropagate using first, second, and third order derivatives ($\psi$):
        \State \hspace*{2em} $\phi_{N}  =  P_{N} \red + D^2_x \ell(X_N) \pi_N \nc-\{D_\xi^3 \ell(X_{N}, \theta_{N}) \beta_{N} \}^\top \lambda_{N}$
        \[ \begin{split} \phi_{k}  & = (I \red + \nc  D_\xi f(X_{k},\theta_k)^\top) \phi_{k+1} \red -\nc  \{D^2_\xi f(X_{k},\theta_k)^\top P_{k}   \}^\top  \pi_{k}  \\ & - \{D^2_\xi f(X_{k},\theta_k)^\top \alpha_{k}   \}^\top  \lambda_{k} \red + \nc  \{D^2_\xi f(X_{k},\theta_k)^\top \beta_{k}   \}^\top  \psi_{k} \\ & \red + \nc (\{D^3_\xi f(X_{k},\theta_k)^\top P_{k}   \}^\top \beta_{k})^\top  \lambda_{k} \end{split}\]
        \State Calculate for each $k$ the gradient:         
        \[ \begin{split} 
          \nabla_\vartheta H(X_k,P_k,\alpha_k,\beta_k,\theta_k) & = \phi_k^\top  D_\vartheta f(X_k,\theta) \red - \nc P_k^\top  D_\xi f(X_k,\theta_k) \pi_k \red - \nc \alpha_k^\top  (D_{\vartheta,\xi} f(X_k,\theta_k)) \lambda_k   \\
          & + \lambda_k^\top  \{   (D_{\vartheta,\xi,\xi} f(X_k,\theta_k)^\top P_k\}^\top  \beta_k +  \beta_k^\top (D_{\vartheta,\xi} f(X_k,\theta_k)) \psi_k
        \end{split}\]     
        \State Update the control for each $k$:
        \State \hspace*{2em} $ \theta_k = \theta_k \red + \nc \gamma \tilde{\E} [\nabla_\vartheta H(X_k,P_k,\alpha_k,\beta_k,\theta_k) ]$ \label{line_update_theta2}
        \State $i=i+1$
        \EndFor              
        \EndWhile
      \end{algorithmic} 
      \label{Algo 2}
    \end{algorithm}     

    Algorithm \ref{Algo 1} and \ref{Algo 2} are written in accordance to the most typical way of implementing training algorithms via the use of batched optimization.  In this type of implementation, the training sample is subdivided in different batches: we calculate all propagation equations for initial points in the subsample values and then update the control $\theta_k$ using the empirical expectation calculated with the points in the sample. 

    Frequently, practitioners use rather small values for the batch size (referred in those cases as \emph{mini-batches}). Note, however, that the mean-field effect of the optimization implies that in addition to the control updating term, we also need to calculate an average in the term $\tilde{\E} \left[ \lVert P_0\rVert^q  \right]$ in line \ref{line_beta0} to initialize $\beta_0$. For stability reasons we therefore advice against using small batch sizes in this case. A notable exception appears when $p =  \infty$ (i.e. $q = 1$), when the initialization of $\beta$ in line \ref{line_beta0} becomes $\beta_0 = \sign(P_0)$. Thus, in this case, the mean-field action disappears and small batches are again perfectly acceptable. 


  \red
  \section{Numerical illustration}
\label{sec:Numerics}

   We illustrate our results numerically in the context of image classification. We train a simple convolutional network\footnote{Two layers with a convolutional kernel, ReLu activation functions, and maxpool; and two linear layers at the end} to perform the classification task on the MNIST database. We then test the network with a \emph{clean} testing sample, and with an \emph{adversarial} version constructed via modification of the latter using PGD with 20 steps and a step size of 0.04.
   
   We train the network in three different versions: the \emph{baseline} method (i.e. unrobust) which uses the cross-entropy loss function,  and the \emph{Order 1} and \emph{Order 2} versions obtained by adding regularization terms as explained in problems \ref{eqn:ControlExpanded} and \ref{eqn:ControlExpandedSecondOrderWithAdjoints}, respectively. As parameters, we fix $\delta=0.2$, $p=\infty$ (equivalently, $q=1$), and we take the norm $\|\cdot \|_*$ to be the $r-$ norm with $r\in \{2, \infty\}$.
    
    Table \ref{Tbl:Accuracy} shows the accuracy of the network after training with the three stated procedures. Although all training procedures perform similarly when evaluated with a clean testing sample, the Order 1 regularization significantly improves the robustness of the network when subject to a sample modified by the adversarial attack. The table also shows that the choice of the vector norm to be used plays a significant role. This is not a surprise, since one can understand the PGD attack as directed by taking infinity norms on successive gradients. Different choices of norms might be better suited for other types of adversarial attacks. Importantly, the improved robustness comes with a moderate cost in training time of 14\% (39\% for Order 2) over the baseline training time. 
    
      \begin{table}
    \begin{tabular}{|c|ccc|ccc|}
      \hline
      & \multicolumn{3}{ c |}{$r=2$} & \multicolumn{3}{ |c| }{$r=$inf} \\
      & Baseline & Order 1 & Order 2 & Baseline & Order 1 & Order 2\\
      \hline
      Accuracy (clean) & 98.41 & 98.42 & 98.42 & 98.41 & \textbf{98.45} &  98.44 \\
      Accuracy (adversarial) & 0.68&  2.09&  2.11& 0.7 & \textbf{23.11} & 22.94\\
      \hline
      Training time (factor)  & 1.0 & 1.14 & 1.39 & \textbf{0.99} & 1.13 & 1.38\\
      \hline
    \end{tabular} \caption{\red Accuracy and relative training time after 5 epochs. Although all training procedures are similarly capable when evaluated with a clean test, the regularization improves the resilience of the network when subject to adversarial attacks at a moderate cost. Note the dependence of results on the chosen vector norm $\lVert \cdot \rVert$. Best values in bold. \label{Tbl:Accuracy}}
  \end{table}
    
    Table \ref{Tbl:Accuracy} also shows that the Order 2 method does not seem to be contributing to the overall robustness improvement beyond what is already done by Order 1. In order to have a better understanding of the numerical effect of each procedure, we plot in Figure \ref{Fig:Surface} a local view of the loss surface obtained by calculating the cross-entropy for a clean testing image and perturbations around it. The corrupted images are obtained by an additive perturbation of the size marked in each axis: one in an adversarial direction and another in a random direction orthogonal to the adversarial. This way of illustrating the results is suggested in \cite{NEURIPS2019_7503cfac}. 
    
    The robustness effect of the regularized problems is illustrated in Figure \ref{Fig:Surface} by a reduction in values of the level of the surface, which translates in smaller cross-entropy and higher likeliness of obtaining an accurate classification. The plot also suggests that the effect of the Order 2 method is small compared to the Order 1 procedure, and mainly tends to reduce the curvature of the surfaces close to the \emph{clean} image. Hence, one would expect better robustness of Order 2 when considering directions not aligned with the adversarial one. In this sense, a more thorough study of the robustness induced by the Order 2 procedure away from the PGD line of attack would be interesting but outside of the scope of this work.
    
   \begin{figure}[h]
     \begin{center}
       \includegraphics[width=0.9\linewidth]{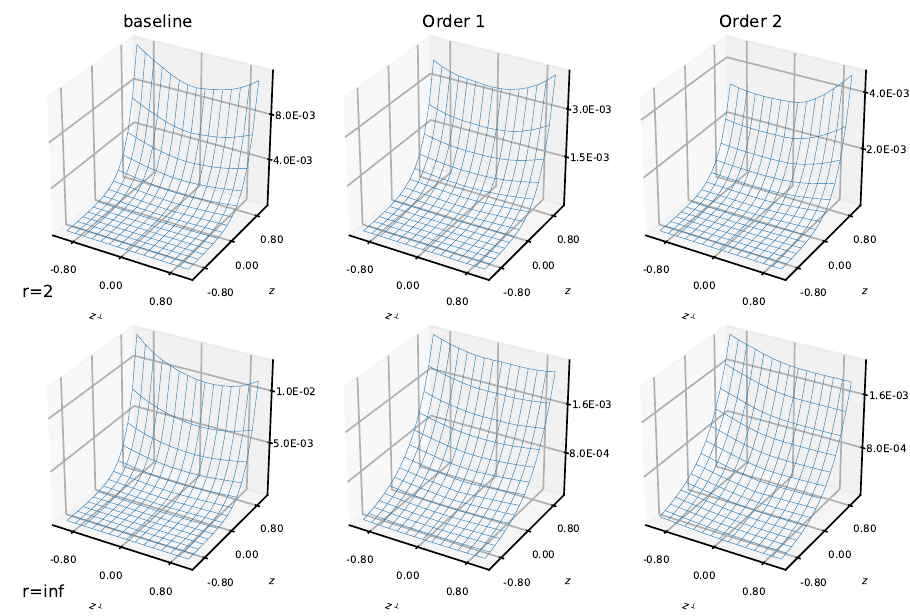}
     \end{center}
     \caption{\red Cross-entropy loss for one test image perturbed in an adversarial direction $z$ and a random one $z^\perp$ that is perpendicular to the adversarial direction. Top $r=2$, bottom $r=$inf. The first order regularization effect is manifested in a reduction in the loss function for perturbed images. The second order regularization effect, if still noticeable, is smaller in comparison, and tends to reduce slightly the curvature. \label{Fig:Surface}}
   \end{figure}
   
     \nc 
 

\section{Conclusions}
\label{sec:Conclusions}

In this paper we have established a series of connections between distributionally robust learning as modeled by a min-max problem of the form \eqref{Robust problem} and regularized risk minimization problems on the parameters of a deep ResNet neural network. To establish this connection, we study the $\max$ part of the min-max problem using tools from optimal transport theory and identify its leading order terms as a function of $\delta$, i.e. the power of the adversary. We remark that this approach is not restricted to adversarial problems on deep neural networks, and in particular can be used in other learning settings as long as the dependence of the loss function on the input data is regular enough. The specific ResNet deep neural network structure, however, allows us to interpret the resulting regularization problems as mean field optimal control problems. In turn, these control problems suggest, through their associated Pontryagin maximum principles, a family of algorithms for the training of robust neural networks which can avoid the computation of data perturbations during training. A key property of the resulting control problems is that they are scalable, and in particular the number and dimension of state variables is within a dimension-free factor of the dimensions of the original (unrobust) learning problem.

Some interesting research directions that stem from this work include: 1) Studying the type of regularity enforced on the input-to-output mappings by the regularization problems discussed in this paper. 2) The analysis of other distributionally robust problems where for example their objective target may contain an entropic term (as motivated in \cite{DongDP0020}). 3) In general, the use of tools from optimal control theory for the robust training of a wider class of neural networks. 4) The study of adversarial problems in other learning settings of interest where specific structure in the models may be exploited to get novel theoretical or algorithmic insights.

\medskip

\textbf{Acknowledgements:} The authors would like to thank two anonymous reviewers for their positive and constructive feedback. The authors would like to thank Leon Bungert for enlightening conversations and for providing them with many useful references. NGT was supported by NSF-DMS grant 2005797 and would also like to thank the IFDS at UW-Madison and NSF through TRIPODS grant 2023239 for their support. Part of this work was completed while NGT was visiting the Simons Institute to participate in the program ``Geometric Methods in Optimization and Sampling" during the Fall of 2021. NGT would like to thank the institute for hospitality and support.  



\bibliography{ML}

\bibliographystyle{abbrv}

\end{document}